\newtheorem{Theorem}{Theorem}[section]
\newtheorem{Lemma}[Theorem]{Lemma}
\newtheorem{Proposition}[Theorem]{Proposition}
\newtheorem{Corollary}[Theorem]{Corollary}
\newtheorem{Conjecture}[Theorem]{Conjecture}
\theoremstyle{Definition}
\newtheorem{Definition}[Theorem]{Definition}  
\theoremstyle{Remark} 
\newtheorem{Remark}[Theorem]{Remark} 
\newtheorem{Example}[Theorem]{Example}
\newtheorem{Acknowledgments}[Theorem]{Acknowledgments}
\newtheorem{Question}[Theorem]{Question}
\DeclareMathOperator{\reg}{reg}
\numberwithin{equation}{section}
\renewcommand{\bar}[1]{\overline{#1}}
\renewcommand{\phi}{\varphi}
\renewcommand{\theta}{\vartheta}
\renewcommand{\epsilon}{\varepsilon}
\begin{document}
\title{Steiner Configurations ideals: containment and colouring}
	
\author[E. Ballico]{Edoardo Ballico}
\address{Dipartimento di Matematica\\
 Via Sommarive, 14 -  38123 Povo (TN), Italy}
\email{edoardo.ballico@unitn.it}
\author[G. Favacchio]{Giuseppe Favacchio}
	\address{Dipartimento di Matematica e Informatica\\
		Viale A. Doria, 6 - 95100 - Catania, Italy}
	\email{favacchio@dmi.unict.it} \urladdr{www.dmi.unict.it/~gfavacchio}
\author[E. Guardo]{Elena Guardo}
\address{Dipartimento di Matematica e Informatica\\
		Viale A. Doria, 6 - 95100 - Catania, Italy}
	\email{guardo@dmi.unict.it} \urladdr{www.dmi.unict.it/guardo}
\author[L. Milazzo]{{Lorenzo Milazzo}{$\dag$}}\thanks{$\dag$   Deceased, March 4, 2019.}
\address{Dipartimento di Matematica e Informatica\\
		Viale A. Doria, 6 - 95100 - Catania, Italy}
	\email{milazzo@dmi.unict.it} \urladdr{www.dmi.unict.it/milazzo}
\author[A. Thomas]{Abu Chackalamannil Thomas}
\address{Department of Mathematics,\\
		Tulane University, New Orleans, LA, U.S.A 70118}
	\email{athoma17@tulane.edu} 

\thanks{Last updated: Jan 15, 2021}


\keywords{monomial ideals; ideals of points; symbolic powers of ideals; Waldschmidt constant; Steiner systems}
\subjclass[2010]{13F55, 13F20,14G50, 51E10,94B27}

\begin{abstract}
Given a homogeneous ideal $I \subseteq k[x_0,\dots,x_n]$,  the Containment problem  studies the relation between symbolic and regular powers of $I$, that is, it asks for which pair $m, r \in \mathbb{N}$,  $I^{(m)} \subseteq I^r$ holds. In the last years, several conjectures have been posed on this problem, creating an active area of current interests and ongoing investigations. In this paper, we investigated  the Stable Harbourne Conjecture and the Stable Harbourne -- Huneke Conjecture and we show that they hold for the defining ideal of a Complement of a Steiner configuration of points in $\mathbb{P}^{n}_{k}$.  We can also show that the ideal of a Complement of a Steiner Configuration of points has expected resurgence, that is, its resurgence is strictly less than its big height, and it also satisfies Chudnovsky and Demailly's Conjectures. Moreover, given a hypergraph $H$, we also study the relation between  its colourability  and the failure of the containment problem for the cover ideal associated to $H$. We apply these results in the case that $H$ is a Steiner System.	
\end{abstract}

\maketitle

\section{Introduction}
In this paper we continue the study of Steiner configurations of points and their invariants, such as Hilbert Function, Betti numbers, Waldschmidt constant, regularity, resurgence found in \cite{BFGM}. We will focus on the Containment problem and we will show that the Stable Harbourne Conjecture and the Stable Harbourne -- Huneke Conjecture hold for the defining ideal of a Complement of a Steiner configuration of points in $\mathbb{P}^n_k:=\mathbb{P}^n$. As pointed out in Remarks 2.5 and 2.6 in \cite{BFGM} in the language of Algebraic Geometry/Commutative Algebra, Steiner configurations of points and their Complement are special subsets of star configurations.

First, we give an overview on the Containment problem to introduce the related conjectures.  Then we devote Section 2 to recall notation, definitions and known results for a Steiner configuration of points and its Complement that we will use to prove the results of this paper.
Let $I$ be a homogeneous ideal in the standard graded polynomial ring $R:=k[x_0, \ldots , x_n ]$, where $k$ is a field.  Given an integer $m$, we denote by $I^m$ the regular power of the ideal $I$. The $m$-th \textit{symbolic power}  of $I$ is defined as 
$$I^{(m)}=\bigcap_{\mathfrak p\in Ass(I)} (I^m R_{\mathfrak p}\cap R) $$
where $Ass(I)$ denotes the set of associated primes of $I$ and $R_{\mathfrak p}$ is the localization of $R$ at a prime ideal ${\mathfrak p}$.

If $I$ is a radical ideal (this includes for instance squarefree monomial ideals and ideals of finite sets of points) then 
$$I^{(m)}=\bigcap_{\mathfrak p\in Ass(I)} {\mathfrak p}^m.$$

Symbolic powers of ideals play a significant role in the famous Zariski-Nagata Theorem (see \cite{N,Z}). If $R$ is a polynomial ring over an algebraically closed field $k$, then $I^{(m)}$ consists precisely of those functions which vanish on the algebraic variety defined by $I$ with multiplicity at least $m$. It is easy to show from the definition that $I^{r}\subseteq I^{(m)}$ if and only if $r\ge m$. The reverse inclusion  $I^{(m)} \subseteq I^r$ motivates the following question.

\begin{Question} (\textit{Containment problem}) \label{q.containement}
Given a homogeneous ideal $I \subseteq k[x_0,\dots,x_n]$, for which pairs $m, r \in \mathbb{N}$, does $I^{(m)} \subseteq I^r$ hold?
\end{Question}

One of the initial works that introduce Question \ref{q.containement} is \cite{S}. The problem is still open in general and in the last couple of decades it was extensively studied  for several classes of ideals, in particular  for ideals  defining finite sets of points in projective and multiprojective spaces, see \cite{BH2,BH, CEH, CHHVT1, ELS,FG, FG2, GHaVT, GHaVT1,GHaVT2013a, MS} just to cite some among all the known results. Containment problems are useful in giving lower bounds to nonzero homogeneous forms vanishing through a finite set of points with a fixed multiplicity. 

It is of great interest to study the ideals of fat points. Given distinct points $P_1,\dots,P_s\in \mathbb{P}^n$ and nonnegative integers $m_i$ (not all $0$), let $Z=m_1p_1+\dots+m_sp_s$ denote the scheme (called a fat point scheme) defined by the ideal $I_Z =\cap_{i=1}^{s}(I_{P_i}^{m_i})\subseteq k[\mathbb{P}^n]$, where $I_{P_i}$ is the ideal generated by all homogeneous polynomials vanishing at $P_i$.  Symbolic powers of $I_Z$ take the form $I_Z^{(m)}=I_{mZ}= \cap_{i=1}^{s}I_{P_i}^{mm_i}$. We say that $Z$ is reduced if $I_Z$ is a radical ideal.

The Containment problem also helps us to bound certain useful invariants like Waldschmidt constant, $\widehat \alpha(I)$, of an ideal $I$ defined as $$\widehat \alpha (I) = \lim_{m\to \infty} \frac{\alpha(I^{(m)})}{m},$$ \noindent where   $\alpha(I)$ is the minimum integer $d$ such that $I_d\neq (0)$, that is, it is the least degree of a minimal generator of $I$. This limit exists and was first defined by Waldschmidt \cite{W} for ideals of finite sets of points in the context of complex analysis; specifically, in our  language, the problem was to determine the minimal degree of a hypersurface that passed through
a collection of points with prescribed multiplicities.

The following slight different version of Question \ref{q.containement} was introduced in \cite{HaHu}.
 Recall that the {\it big height} of an ideal $I$ refers to the maximum of all the heights of its associated prime ideals.

\begin{Conjecture}
Let $Z\subset \mathbb{P}^n$ be a fat point scheme and $I:=I_Z$ the ideal defining $Z$. Let $\mathcal{M} = (x_0,\dots,x_n)$ be the graded maximal ideal. Then $I^{(rn)}\subseteq \mathcal{M}^{r(n-1)}I^r$ holds for all $r >0$.
\end{Conjecture}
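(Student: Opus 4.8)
The plan is to start from the Ein--Lazarsfeld--Smith containment and to extract the additional factor $\mathcal{M}^{r(n-1)}$ from it, promoting the result to every $r$ by peeling off one symbolic step at a time.

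First I would record the two standing inputs. Writing $Z=\sum_i m_iP_i$, the Zariski--Nagata description gives $I^{(nr)}=\bigcap_i I_{P_i}^{nrm_i}$, and each associated prime $I_{P_i}$ has height $n$, so the big height of $I$ equals $n$. Hence $I^{(nr)}\subseteq I^r$ by \cite{ELS}, and the entire task is to show that each homogeneous $F\in I^{(nr)}$ lies in the smaller ideal $\mathcal{M}^{r(n-1)}I^r$, and not merely in $I^r$.

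The elementary engine is a splitting observation: if $I^r$ is generated in degrees $\le t_r$, then $(\mathcal{M}^{r(n-1)}I^r)_d=(I^r)_d$ for every $d\ge t_r+r(n-1)$, because a spanning product $x^{\gamma}g$ with $\deg g=e\le t_r$ has $|\gamma|=d-e\ge r(n-1)$ and thus splits as $x^{\gamma'}(x^{\gamma''}g)$ with $x^{\gamma'}\in\mathcal{M}^{r(n-1)}$ and $x^{\gamma''}g\in I^r$. Combined with \cite{ELS}, the conjecture follows whenever $\alpha(I^{(nr)})\ge t_r+r(n-1)$; and since $\alpha(I^{(nr)})\ge nr\,\widehat\alpha(I)$ by subadditivity, while $\reg(I^r)$ is eventually linear with slope $\rho$ bounded by the top generating degree of $I$, this degree route succeeds for all large $r$ exactly when $n\,\widehat\alpha(I)\ge\rho+n-1$. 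That inequality holds for fat points at sufficiently spread-out configurations, but it can fail badly for degenerate ones: for $s\ge2$ collinear points in $\mathbb{P}^2$ one has $\widehat\alpha(I)=1$ and $\rho=s$, so the bound fails for every $s\ge2$, even asymptotically --- and yet the containment still holds, because the offending element $\ell^{2r}\in I^{(2r)}$ factors as $\ell^{r}\cdot\ell^{r}$ with $\ell^{r}\in\mathcal{M}^{r}$ and $\ell^{r}\in I^{r}$.

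This example indicates the route I expect to work in general: replace the degree count by an inductive factorization, reducing the conjecture to the single-step containment
\[
I^{(nr)}\ \subseteq\ \mathcal{M}^{\,n-1}\,I\,\cdot\,I^{(n(r-1))}\qquad(r\ge1,\ \text{with }I^{(0)}=R),
\]
since iterating it and collapsing one copy of $\mathcal{M}^{\,n-1}I$ at each stage yields exactly $\mathcal{M}^{r(n-1)}I^r$. The base case $r=1$ is the statement $I^{(n)}\subseteq\mathcal{M}^{\,n-1}I$, which is itself the $r=1$ instance of the conjecture, so the difficulty is genuinely present already here. The main obstacle is uniform control of the low-degree elements of $I^{(nr)}$: those concentrated on the positive-dimensional loci (lines, planes, and so on) spanned by subsets of the $P_i$ must be factored through suitable powers of the forms cutting out those loci, exactly as $\ell^{2r}$ was handled above, and such elements admit no uniform description across arbitrary configurations. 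I would attack the single-step containment locus by locus, organising the points according to the subvarieties they span and using reduction to generic position together with, in positive characteristic, the Frobenius and $F$-purity techniques in the spirit of Grifo and Huneke to control the symbolic Rees filtration. Making this factorization uniform over all fat-point schemes is the crux on which the whole argument turns.
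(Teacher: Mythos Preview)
The statement you are trying to prove is recorded in the paper as a \emph{conjecture} (a version of Conjecture~2.1 of Harbourne--Huneke), not as a theorem. The paper does not prove it, nor does it claim to; it only establishes the special case where $I$ defines the Complement of a Steiner configuration (Proposition~\ref{prop}). In that special case the argument is exactly your degree-counting route: one knows $\alpha(I^{(nr)})=rv$ and $\omega(I^r)=r(v-n)$ explicitly, so $\alpha(I^{(nr)})-\omega(I^r)=rn\ge r(n-1)$, and combined with $I^{(nr)}\subseteq I^r$ this gives $I^{(nr)}\subseteq\mathcal{M}^{rn}I^r\subseteq\mathcal{M}^{r(n-1)}I^r$. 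So your first paragraph already captures the paper's entire contribution on this point.

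For the general fat-point statement, there is no paper proof to compare against, and your proposal is not a proof either. You correctly identify that the degree route fails (your collinear example is apt), and you then reduce everything to the single-step containment $I^{(nr)}\subseteq\mathcal{M}^{\,n-1}I\cdot I^{(n(r-1))}$ --- but you do not prove that containment, and you say as much in your final sentence. That single step is already the $r=1$ case of the conjecture together with compatibility with the symbolic filtration, and it is open in this generality; the ``locus by locus'' attack and the appeal to $F$-purity are gestures, not arguments. So the gap is precisely where you locate it: the inductive step is assumed, not established, and no amount of reorganisation around it will close the argument without a genuinely new idea.
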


B. Harbourne conjectured in \cite{BDHKaKnSySz}:

\begin{Conjecture}
Given a nonzero, proper, homogeneous, radical ideal $I \subseteq k[x_0,\dots,x_n]$ with big height $h$, $$I^{(hr-h+1)} \subseteq I^r$$ for all $r\geq 1.$
\end{Conjecture}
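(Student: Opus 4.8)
The plan is to treat the conjecture as a \emph{one-step sharpening} of the uniform containment theorem and to isolate exactly where the extra strength must be supplied. By Ein--Lazarsfeld--Smith (\cite{ELS}) in characteristic zero, and by Hochster--Huneke in positive characteristic, every nonzero proper radical ideal $I$ of big height $h$ already satisfies $I^{(hr)}\subseteq I^{r}$ for all $r\ge 1$. Since $hr-h+1=h(r-1)+1$, the conjecture asks to gain the additive constant $h-1$ over this baseline: writing $m=h(r-1)+1$, one must show that every form vanishing to order at least $m$ along $V(I)$ already lies in $I^{r}$. The first step is to set up this comparison and reduce everything to controlling the single quantity $h-1$, uniformly in $r$.

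I would then separate the range of $r$ using the resurgence $\rho(I)=\sup\{a/b : I^{(a)}\not\subseteq I^{b}\}$, for which $I^{(m)}\subseteq I^{r}$ holds whenever $m/r>\rho(I)$. For the conjectural pair the ratio is
$$\frac{m}{r}=\frac{h(r-1)+1}{r}=h-\frac{h-1}{r},$$
which increases to $h$ as $r\to\infty$, while the ELS/Hochster--Huneke bound only guarantees $\rho(I)\le h$. Consequently the \emph{asymptotic} part ($r\gg 0$) of the conjecture is exactly the statement of \emph{expected resurgence}, $\rho(I)<h$: once this strict inequality is known, every sufficiently large $r$ satisfies $h-(h-1)/r>\rho(I)$ and the containment follows. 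Establishing $\rho(I)<h$ in turn requires a genuine upper bound on the resurgence, beyond the degree information carried by the Waldschmidt constant $\widehat\alpha(I)$, which by Bocci--Harbourne only bounds $\rho(I)$ from below via $\rho(I)\ge\alpha(I)/\widehat\alpha(I)$; here one would invoke a sufficient criterion for expected resurgence, by which a single non-trivial containment in the Harbourne range already forces $\rho(I)<h$.

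The decisive difficulty, which I expect to be the main obstacle, is upgrading this asymptotic statement to \emph{all} $r\ge 1$, and in particular to the smallest non-trivial value $r=2$, where the conjecture reads $I^{(h+1)}\subseteq I^{2}$. This case sits strictly below the asymptotic threshold, so expected resurgence alone gives no information there; moreover the gain of $h-1$ is sharp, so the multiplier-ideal and Frobenius-power arguments that produce the ELS/Hochster--Huneke baseline leave no slack to exploit. A successful argument must therefore inject finer invariants at the boundary: for instance the necessary degree inequality $\alpha(I^{(h+1)})\ge 2\,\alpha(I)$, or a factorization of the symbolic power through a power of the maximal ideal $\mathcal{M}$ times $I^{2}$ in the spirit of the Harbourne--Huneke refinement, so as to convert the strict inequality $\rho(I)<h$ into an honest containment for every small $r$. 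Securing this uniformly, without ever losing the single unit $h-1$, is the crux of the problem.
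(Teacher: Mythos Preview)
The statement you are attempting to prove is presented in the paper as a \emph{conjecture}, not a theorem, and the paper contains no proof of it. Immediately after stating it, the paper notes that a counterexample was found in \cite{DST} (Dumnicki--Szemberg--Tutaj-Gasi\'nska). In other words, the conjecture is \emph{false} in the stated generality: there exist radical ideals of points in $\mathbb{P}^2$ (so $h=n=2$) for which $I^{(3)}\not\subseteq I^2$, which is precisely the case $r=2$ of the conjecture.

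Your outline is a reasonable diagnostic of where the difficulty lies --- indeed, you correctly isolate $r=2$ and the containment $I^{(h+1)}\subseteq I^2$ as the crux, and you correctly observe that the expected-resurgence / asymptotic machinery gives no leverage there. But the obstacle is not merely technical: the containment genuinely fails for the DST configuration, so no argument of the type you sketch (degree inequalities via $\alpha$, Harbourne--Huneke-style factorizations through powers of $\mathcal{M}$, or sharpening multiplier/Frobenius bounds) can close the gap in general. What the paper actually proves are the \emph{stable} versions (Conjectures~\ref{SHC} and~\ref{SHHC}), valid only for $r\gg 0$, and only for the specific class of ideals defining Complements of Steiner configurations --- not the full conjecture you have addressed.
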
 

A counterexample to the above conjecture was initially found in \cite{DST}. 

A celebrated result of \cite{ELS, HoHu, MS} is shown in the next theorem.
\begin{Theorem}\label{einHH}
Let $R$ be a regular ring and $I$ a radical ideal in $R$. Then for all $n \in \mathbb{N}$,
$$I^{(hn)} \subseteq I^n,$$  
\noindent whenever $h$ is the big height of $I$. 
\end{Theorem}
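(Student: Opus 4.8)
The plan is to run the positive-characteristic argument of Hochster--Huneke, which is the most elementary, and to indicate how the characteristic-zero case (Ein--Lazarsfeld--Smith, via multiplier ideals) and the mixed-characteristic case (Ma--Schwede, via perfectoid big Cohen--Macaulay algebras) fit the same skeleton. First I would dispose of the trivial cases $h\le 1$ and $n=1$, where $I^{(n)}=I^{n}$ (resp.\ $I^{(h)}\subseteq I^{(1)}=I$) is immediate, and, when $R$ is essentially of finite type over a field of characteristic $0$, reduce to $\op{char}R=p>0$ by the usual spreading-out/reduction-mod-$p$ argument, under which all the inclusions and the equalities of associated primes used below survive for a dense set of primes. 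So assume $\op{char}R=p>0$.

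The engine is the prime-power comparison $I^{(hq)}\subseteq I^{[q]}$ for every $q=p^{e}$. Since $I$ is radical, every associated prime $\mathfrak p$ of $I$ is minimal over $I$, so $IR_{\mathfrak p}=\mathfrak pR_{\mathfrak p}$ is the maximal ideal of the regular local ring $R_{\mathfrak p}$, of dimension $c:=\op{ht}\mathfrak p\le h$; choosing a regular system of parameters, $\mathfrak pR_{\mathfrak p}=(x_{1},\dots,x_{c})$, the ideal $(\mathfrak pR_{\mathfrak p})^{N}$ is generated by the monomials of degree $N$, and a pigeonhole count shows that when $N\ge c(q-1)+1$ every such monomial has an exponent $\ge q$, whence
\[
(\mathfrak pR_{\mathfrak p})^{hq}\subseteq(\mathfrak pR_{\mathfrak p})^{cq}\subseteq(\mathfrak pR_{\mathfrak p})^{[q]}=I^{[q]}R_{\mathfrak p}.
\]
Because $R$ is regular, Frobenius is flat (Kunz), so $\op{Ass}(R/I^{[q]})=\op{Ass}(R/I)$; in particular $R/I^{[q]}$ has no embedded primes, hence $I^{[q]}=\bigcap_{\mathfrak p\in\op{Ass}(I)}(I^{[q]}R_{\mathfrak p}\cap R)$. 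Intersecting the displayed inclusion over $\mathfrak p\in\op{Ass}(I)$ and comparing with $I^{(hq)}=\bigcap_{\mathfrak p}(\mathfrak p^{hq}R_{\mathfrak p}\cap R)$ yields $I^{(hq)}\subseteq I^{[q]}$.

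The remaining, and most delicate, step is to descend from prime powers $q$ to a general $n$: the cheap estimate $(I^{(hn)})^{\lceil q/n\rceil}\subseteq I^{(hn\lceil q/n\rceil)}\subseteq I^{(hq)}\subseteq I^{[q]}\subseteq I^{q}$ only gives $I^{(hn)}\subseteq\bar{I^{n}}$, the integral closure, which is too weak. Instead I would refine the local pigeonhole to show that for $u\in I^{(hn)}$ and $q=p^{e}$ one has $u^{q}\in\bigl((\mathfrak pR_{\mathfrak p})^{n}\bigr)^{[q]}=(I^{n})^{[q]}R_{\mathfrak p}$ at every minimal prime $\mathfrak p$ of $I$ (this is exactly where the hypotheses $h\ge 2$, $n\ge 2$ enter, via $\lfloor a/q\rfloor$-estimates with $h$ summands), and then absorb the gap between $(I^{n})^{[q]}$ and the intersection of its localizations over the minimal primes of $I$ — a gap caused by the embedded primes of $I^{n}$, which do not depend on $q$ since Frobenius is flat — using a single nonzero element $c$ (a uniform annihilator, in the spirit of uniform Artin--Rees and test elements). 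This gives $c\,u^{q}\in(I^{n})^{[q]}$ for all $q\gg 0$, i.e.\ $u\in(I^{n})^{*}$, and $(I^{n})^{*}=I^{n}$ because $R$ is regular, completing the positive-characteristic case.

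In characteristic $0$ the same skeleton runs with the asymptotic multiplier ideals of the graded family $\{I^{(m)}\}_{m}$ (denoted $\cal{J}(\|I^{(\bullet)}\|^{\lambda})$) in place of Frobenius powers: one obtains
\[
I^{(hn)}\subseteq\cal{J}\bigl(\|I^{(\bullet)}\|^{hn}\bigr)\subseteq\cal{J}\bigl(\|I^{(\bullet)}\|^{h}\bigr)^{n}\subseteq I^{n},
\]
where the outer inclusions are local computations using that $IR_{\mathfrak p}$ needs at most $h$ generators and the middle one is subadditivity of asymptotic multiplier ideals; in mixed characteristic the role played by tight closure is taken over by big Cohen--Macaulay algebras produced by perfectoid methods, following Ma--Schwede. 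I expect the descent from $q=p^{e}$ to general $n$ to be the crux in every incarnation: one must control an honest $n$-th ordinary power of $I$, not a Frobenius power or an integral closure, and it is precisely this that forces in the deeper machinery — tight closure, asymptotic multiplier ideals, or big Cohen--Macaulay algebras — together with the uniformity (a single $c$, resp.\ a single log resolution or BCM algebra) that legitimizes the passage to the limit in $q$.
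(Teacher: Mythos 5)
A point of reference first: the paper gives no proof of this theorem at all --- it is quoted as a known result of Ein--Lazarsfeld--Smith, Hochster--Huneke and Ma--Schwede --- so your sketch has to stand on its own. The first half of your positive-characteristic argument does: for $q=p^e$ the local pigeonhole at the minimal primes of the radical ideal $I$ gives $(\mathfrak p R_{\mathfrak p})^{hq}\subseteq (\mathfrak p R_{\mathfrak p})^{[q]}$, and since Frobenius is flat (Kunz) the ideal $I^{[q]}$ has no embedded primes, so the local containments glue to $I^{(hq)}\subseteq I^{[q]}$. That part is complete and is exactly the Hochster--Huneke reduction.

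The gap is in the step you yourself single out as the crux. You propose to pass from ``$u^q\in (I^n)^{[q]}R_{\mathfrak p}$ for every minimal prime $\mathfrak p$ of $I$'' to ``$c\,u^q\in (I^n)^{[q]}$'' via a single nonzero $c$ annihilating, uniformly in $q$, the gap module $N_q:=\bigl((I^n)^{[q]}R_W\cap R\bigr)/(I^n)^{[q]}$, where $W$ is the complement of the union of the minimal primes. Such a $c$ cannot exist whenever $I^n$ has embedded primes (which happens even for squarefree monomial ideals): by flatness of Frobenius, $(I^n)^{[q]}R_W\cap R=\bigl(I^nR_W\cap R\bigr)^{[q]}$, hence $\operatorname{Ann}(N_q)=\bigl(\operatorname{Ann}(N_1)\bigr)^{[q]}$, and if $N_1\neq 0$ then $\bigcap_q\bigl(\operatorname{Ann}(N_1)\bigr)^{[q]}\subseteq\bigcap_m\bigl(\operatorname{Ann}(N_1)\bigr)^m=0$. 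The constancy of the set of embedded primes in $q$ does not yield a uniform annihilator. The standard repair never saturates $(I^n)^{[q]}$ at all: fix $0\neq d\in I^{(h(n-1))}$ and write $q=an+r$ with $0\le r\le n-1$; then $u^a d\in I^{(hna)}I^{(hr)}\subseteq I^{(hq)}\subseteq I^{[q]}$, so $(u^ad)^n=d^nu^{q-r}\in (I^{[q]})^n=(I^n)^{[q]}$ and therefore $d^nu^q\in (I^n)^{[q]}$; taking $c=d^n$, which is independent of $q$, gives $u\in (I^n)^*=I^n$ since $R$ is regular. The uniform multiplier thus comes from the hypothesis $u\in I^{(hn)}$ via Euclidean division, not from any uniform control of the embedded components of $(I^n)^{[q]}$. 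With that substitution your outline becomes the actual Hochster--Huneke proof; your remarks on the characteristic-zero (asymptotic multiplier ideal) and mixed-characteristic (perfectoid BCM) incarnations are consistent with how those cases are handled in the cited sources.
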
One could hope to sharpen the containment by reducing the symbolic power on the left hand side by a constant or increasing the ordinary power on the right hand side by a fixed constant. This motivates us to look at stable versions of Conjectures 2.1 and 4.1 in \cite{HaHu}, respectively.

\begin{Conjecture} (Stable Harbourne Conjecture)\label{SHC}
Given a nonzero, proper, homogeneous, radical ideal $I \subseteq k[x_0,\dots,x_n]$ with big height $h$, then $$I^{(hr-h+1)} \subseteq I^r$$ for all $r\gg 0.$
\end{Conjecture}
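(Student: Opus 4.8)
The plan is to prove the conjecture for the class of ideals this paper is concerned with, namely the defining ideal $I=I_{\mathbb{X}}\subseteq R=k[x_0,\dots,x_n]$ of a Complement $\mathbb{X}$ of a Steiner configuration of points in $\mathbb{P}^n$. Since $\mathbb{X}$ is a finite set of reduced points, $I$ is radical, its associated primes are precisely the ideals $I_P$ with $P\in\mathbb{X}$, and each of these has height $n$; hence the big height of $I$ equals $n$ and the target becomes $I^{(nr-n+1)}\subseteq I^r$ for $r\gg 0$.

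The core of the argument is to bound the resurgence $\rho(I):=\sup\{\,m/r\ :\ I^{(m)}\not\subseteq I^r\,\}$. First I would record the elementary reduction: if $\rho(I)<n$, say $\rho(I)\le n-\delta$ with $\delta>0$, then for every $r>(n-1)/\delta$ one has $\frac{nr-n+1}{r}=n-\frac{n-1}{r}>n-\delta\ge\rho(I)$, hence $I^{(nr-n+1)}\subseteq I^r$; this already proves the conjecture for $I$, with explicit $r_0=\lceil(n-1)/\delta\rceil$. Thus everything reduces to showing that $I_{\mathbb{X}}$ has \emph{expected resurgence}, $\rho(I_{\mathbb{X}})<n$. (One may alternatively phrase the reduction through the theorems of Grifo and of Bisui--Grifo--H\`a--Nguyen, which derive the stable Harbourne and the stable Harbourne--Huneke statements, and also Chudnovsky's and Demailly's conjectures, from expected resurgence; the elementary version above already suffices for \ref{SHC}, and it uses Theorem \ref{einHH} only through the crude bound $\rho(I)\le n$.)

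To prove $\rho(I_{\mathbb{X}})<n$ I would combine a general upper bound for the resurgence of an ideal of points with the explicit invariants of $\mathbb{X}$. For the former I would use a Bocci--Harbourne type estimate of the shape $\rho(I)\le\reg(I)/\widehat\alpha(I)$, or a variant with the top generating degree in place of $\reg(I)$ (see \cite{BH}); for the latter I would quote from \cite{BFGM} the closed form of the Hilbert function of $\mathbb{X}$, and hence of $\alpha(I_{\mathbb{X}})$, $\reg(I_{\mathbb{X}})$ and of the Waldschmidt constant $\widehat\alpha(I_{\mathbb{X}})$, expressed through the parameters of the underlying Steiner system. Substituting these formulas into the bound collapses the statement to the purely numerical inequality $\reg(I_{\mathbb{X}})<n\,\widehat\alpha(I_{\mathbb{X}})$, which I would then verify for all admissible Steiner parameters. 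Since a Complement of a Steiner configuration is moreover a special subset of a star configuration of points, a parallel route is to compute $\rho(I_{\mathbb{X}})$ directly from the combinatorics of the defining general linear forms, using that symbolic powers of such ideals are generated in controlled degrees.

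The main obstacle is precisely this last numerical inequality: one needs the invariants of \cite{BFGM} in sharp closed form and must check $\reg(I_{\mathbb{X}})<n\,\widehat\alpha(I_{\mathbb{X}})$ uniformly, the tight cases being small $n$ and Steiner systems close to the full star configuration, where the gap $n-\rho(I_{\mathbb{X}})$ is narrowest and the choice of resurgence bound matters most. Feeding the same invariants --- now also the asymptotic regularity of the powers $I_{\mathbb{X}}^{(m)}$ --- into the Bisui--Grifo--H\`a--Nguyen framework is what would additionally yield the stable Harbourne--Huneke containment $I_{\mathbb{X}}^{(nr)}\subseteq\mathcal{M}^{r(n-1)}I_{\mathbb{X}}^{r}$ and Chudnovsky's and Demailly's conjectures for $I_{\mathbb{X}}$.
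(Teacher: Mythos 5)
Your proposal is correct, but it reaches Conjecture~\ref{SHC} for this class of ideals by a genuinely different route than the paper. You reduce everything to expected resurgence $\rho(I)<n$ and then invoke the Bocci--Harbourne bound $\rho(I)\le \reg(I)/\widehat{\alpha}(I)$ from \cite{BH}; with the invariants from \cite{BFGM} (Corollaries~\ref{coro} and~\ref{coro2}) this gives $\rho(I)\le n(v-n+1)/v<v\cdot n/v=n$, and the ``numerical inequality'' you flag as the main obstacle, $\reg(I)<n\,\widehat{\alpha}(I)$, is simply $v-n+1<v$, which holds for every admissible Steiner system --- so your worry about tight cases evaporates and the argument closes. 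The paper instead proves the containment directly: in Theorem~\ref{t. Stable main} it checks $\alpha(I^{(n(r-1))})=(r-1)v\ge r\,\reg(I)=r(v-n+1)$ for $r\gg 0$ to get $I^{(n(r-1))}\subseteq I^r$, tracks the gap between $\alpha(I^{(n(r-1))})$ and $\alpha(I^r)$ to insert powers of $\mathcal{M}$ (yielding the Stable Harbourne--Huneke containments), and bootstraps with Johnson's containment from \cite{J} to obtain $I^{(nk-n)}\subseteq I^k$ for all $k\gg 0$; expected resurgence is then deduced \emph{afterwards} (Corollary~\ref{expected}, via Lemma~\ref{l. main lemma} and Corollary~\ref{expres}), i.e.\ the logical direction is reversed relative to yours. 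Both arguments ultimately rest on the same engine --- for a saturated ideal of points, $\alpha(I^{(m)})\ge r\,\reg(I)$ forces $I^{(m)}\subseteq I^r$ --- but your packaging is shorter and gives an explicit threshold $r_0$ for \ref{SHC} alone, while the paper's direct degree-chase buys the stronger $\mathcal{M}$-adic refinements of Conjecture~\ref{SHHC} and the sharper exponent $nk-n$ in place of $nk-n+1$, which the resurgence bound by itself does not deliver.
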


\begin{Conjecture}(Stable Harbourne--Huneke Conjecture) \label{SHHC}
Let $I \subseteq k[x_0,\dots,x_n]$ be a homogeneous radical ideal of big height $h$. Let $\mathcal{M} = (x_0,\dots,x_n)$ be the graded maximal ideal. Then for~$r\gg0$,

\begin{enumerate}
    \item $I^{(hr)} \subseteq \mathcal{M}^{r(h-1)}I^r$
    \item $I^{(hr-h+1)} \subseteq \mathcal{M}^{(r-1)(h-1)}I^r$.
\end{enumerate}

\end{Conjecture}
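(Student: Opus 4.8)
The statement to establish (for the ideals studied in this paper) is that Conjectures~\ref{SHC} and~\ref{SHHC} hold for the defining ideal $I\subseteq R=k[x_0,\dots,x_n]$ of a Complement $\mathbb{X}$ of a Steiner configuration of points in $\mathbb{P}^n$. The plan is to deduce both from a single input: that $I$ has \emph{expected resurgence}, i.e.\ its resurgence $\rho(I)=\sup\{m/r:\,I^{(m)}\not\subseteq I^{r}\}$ is strictly less than its big height. Since $\mathbb{X}$ is a reduced finite point set, $I$ is radical and all its associated primes have height $n$, so the big height equals $n$. By the very definition of $\rho(I)$, whenever $m/r>\rho(I)$ one automatically has $I^{(m)}\subseteq I^{r}$; and since $\tfrac{nr-n+1}{r}=n-\tfrac{n-1}{r}\to n$, once $\rho(I)<n$ is known there is an $r_0$ with $I^{(nr-n+1)}\subseteq I^{r}$ for all $r\ge r_0$ (any $r_0>\tfrac{n-1}{n-\rho(I)}$ works). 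This is precisely Conjecture~\ref{SHC}.

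For Conjecture~\ref{SHHC} I would upgrade these containments using the Waldschmidt constant together with the (eventually linear) Castelnuovo--Mumford regularity of the ordinary powers. For any homogeneous ideal $J$ and $c\ge0$ one has $(\mathcal{M}^{c}J)_{d}=J_{d}$ as soon as $d\ge\reg(J)+c$. Take $J=I^{r}$: if $F\in I^{(nr)}$ is homogeneous then $\deg F\ge\alpha(I^{(nr)})\ge nr\,\widehat{\alpha}(I)$, so provided
\[
nr\,\widehat{\alpha}(I)\ \ge\ \reg(I^{r})+r(n-1),
\]
$F$ lies in a degree where $\mathcal{M}^{r(n-1)}I^{r}$ and $I^{r}$ coincide; combined with $I^{(nr)}\subseteq I^{r}$ (Theorem~\ref{einHH}) this gives part~$(1)$. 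Part~$(2)$ is the identical argument with $nr$ replaced by $nr-n+1$, $r(n-1)$ by $(r-1)(n-1)$, and Theorem~\ref{einHH} replaced by the containment $I^{(nr-n+1)}\subseteq I^{r}$ just proved; the needed estimate becomes $(nr-n+1)\widehat{\alpha}(I)\ge\reg(I^{r})+(r-1)(n-1)$. Writing $a=\lim_{r}\reg(I^{r})/r$ (which exists and satisfies $\alpha(I)\le a\le\omega(I)$, where $\omega(I)$ is the top degree of a minimal generator), both estimates hold for $r\gg0$ as soon as $n\,\widehat{\alpha}(I)\ge a+n-1$ --- in particular whenever $n\,\widehat{\alpha}(I)\ge\omega(I)+n-1$, and, when $I$ is generated in a single degree so that $a=\alpha(I)$, exactly when Chudnovsky's bound holds for $\mathbb{X}$. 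One must be a little careful with the lower-order term in case this inequality is an equality, which is precisely where the exact values of $\widehat{\alpha}(I)$ and $\reg(I^{r})$ recorded in \cite{BFGM} enter; alternatively this step can be bypassed by invoking the recent results that derive the stable Harbourne--Huneke containments directly from expected resurgence.

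So everything reduces to two facts about a Complement $\mathbb{X}$ of a Steiner configuration, to be extracted from the invariants computed in \cite{BFGM} and collected in Section~2: (A) $\rho(I)<n$, and (B) $n\,\widehat{\alpha}(I)\ge a+n-1$. For (A) one combines the explicit formulas for $\widehat{\alpha}(I)$, $\alpha(I)$, $\omega(I)$ with the standard upper bounds for the (asymptotic) resurgence of ideals of finite point sets in terms of these invariants; for (B) one uses the regularity of the powers of $\mathbb{X}$ together with the Chudnovsky inequality for $\mathbb{X}$ (which the authors also establish). The genuine obstacle is computing $\widehat{\alpha}(I)$ and $\rho(I)$ precisely, and for this one must understand $I^{(m)}=\bigcap(L_{i_1},\dots,L_{i_n})^{m}$, where --- unlike for a full star configuration --- the intersection runs only over the $n$-subsets of the defining hyperplanes that are \emph{not} blocks of the Steiner system. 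The key is to exploit the $2$-design property (every pair of hyperplanes lies in exactly one block) to solve the associated covering linear program and certify that $\alpha(I^{(m)})$ is attained by a product of the $L_i$; this is the combinatorial heart of the matter and is exactly the content imported from \cite{BFGM}. Once (A) and (B) are in hand, the deductions above are routine.
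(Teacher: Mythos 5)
Your proposal is essentially correct, and for the Harbourne--Huneke containments it lands on the same mechanism the paper uses; but for the Stable Harbourne part it takes a genuinely different (and shorter) route. The paper's proof of Theorem \ref{t. Stable main} establishes $I^{(n(r-1))}\subseteq I^r$ for $r\gg0$ directly from the criterion $\alpha(I^{(n(r-1))})=(r-1)v\ge r\reg(I)=r(v-n+1)$, inserts the power of $\mathcal{M}$ by comparing $\alpha(I^{(n(r-1))})$ with $\omega(I^r)=\alpha(I^r)=r(v-n)$ (this is exactly your degree-gap argument, except the paper uses the maximal generating degree $\omega(I^r)$ rather than $\reg(I^r)$, which avoids your worries about the asymptotics of $\reg(I^r)$ and the borderline case of equality), bumps the symbolic index by one via Euler's formula to get $I^{(n(r-1)+1)}\subseteq\mathcal{M}^{(r-1)(n-1)}I^r$, and then deduces SHC from the subadditivity-type containment $I^{(ns+a_1+\cdots+a_s)}\subseteq I^{(a_1+1)}\cdots I^{(a_s+1)}$ of Johnson \cite{J}; expected resurgence $\rho(I)<n$ is only obtained afterwards, as Corollary \ref{expected}, via Lemma \ref{l. main lemma}. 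You invert this logic: you first get $\rho(I)<n$ from the explicit invariants and then read off SHC from the definition of resurgence. That works, and is arguably more economical, provided you name the bound you are invoking: for a reduced zero-dimensional scheme one has $\rho(I)\le\reg(I)/\widehat{\alpha}(I)$ (Bocci--Harbourne \cite{BH}), which here gives $\rho(I)\le n(v-n+1)/v<n$ at once. Two small cautions: your phrase ``standard upper bounds'' should be made concrete, since without that citation step (A) is not established; and for item (1) of Conjecture \ref{SHHC} you do not need any of this --- the paper's Proposition \ref{prop} gives the stronger $I^{(nr)}\subseteq\mathcal{M}^{rn}I^r$ for \emph{all} $r$ purely from $\alpha(I^{(nr)})=rv$ and $\omega(I^r)=r(v-n)$, which is your part (1) argument with $\reg(I^r)$ replaced by $\omega(I^r)$. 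Also note that the combinatorial input you describe at the end (the computation of $\alpha(I^{(m)})$ and $\widehat{\alpha}(I)=v/n$) is indeed imported wholesale from \cite{BFGM}, exactly as in the paper, though the design-theoretic property used there is the general $t$-design condition, not only the $t=2$ case you mention.
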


In the study of finding the least degree of minimal generators of an ideal $I$, Chudnovsky made the following:

\begin{Conjecture} (Chudnovsky's Conjecture). Suppose that $k$ is an algebraically closed field of characteristic $0$. Let $I$ be the defining ideal of a set of points $X\subseteq \mathbb{P}^n_k$. Then, for all $h > 1$,
$$\frac{\alpha(I^{(h)})}{h}\geq \frac{\alpha(I)+n-1}{n}.$$
\end{Conjecture}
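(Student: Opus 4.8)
The plan is to translate the asserted degree inequality into a containment of ideals, where the subject naturally lives, and then read the inequality off by comparing initial degrees. The elementary principle is that every containment of the form $I^{(m)} \subseteq \mathcal{M}^{c} I^{r}$ forces a comparison of least degrees: since any nonzero element of $\mathcal{M}^{c} I^{r}$ has degree at least $c + \alpha(I^{r}) = c + r\,\alpha(I)$, the least-degree element of $I^{(m)}$ must satisfy
$$\alpha\bigl(I^{(m)}\bigr) \geq c + r\,\alpha(I).$$
Pairing this with the standard comparison $\alpha(I^{(hn)}) \leq n\,\alpha(I^{(h)})$ — which follows from $\bigl(I^{(h)}\bigr)^{n} \subseteq I^{(hn)}$ together with $\alpha\bigl((I^{(h)})^{n}\bigr) = n\,\alpha(I^{(h)})$ — is what will produce the factor $n$ in the denominator of the Chudnovsky bound.

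Concretely, the single containment I would aim for is
$$I^{(hn)} \subseteq \mathcal{M}^{h(n-1)} I^{h},$$
which is exactly the Harbourne--Huneke conjecture, Conjecture~2.1 of \cite{HaHu}, specialized to $r = h$. Granting it, the degree principle gives $\alpha(I^{(hn)}) \geq h(n-1) + h\,\alpha(I) = h\bigl(\alpha(I)+n-1\bigr)$, and combining with $\alpha(I^{(hn)}) \leq n\,\alpha(I^{(h)})$ yields $n\,\alpha(I^{(h)}) \geq h\bigl(\alpha(I)+n-1\bigr)$, which is precisely $\frac{\alpha(I^{(h)})}{h} \geq \frac{\alpha(I)+n-1}{n}$. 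Thus for any set of points $X$ the Chudnovsky inequality is implied by this one containment, and the entire difficulty is concentrated there.

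The main obstacle is that the containment above is itself open for an arbitrary $X$: in full generality one has at one's disposal only its \emph{stable} form, Conjecture~\ref{SHHC}, valid for $r \gg 0$. The unconditional input is the Ein--Lazarsfeld--Smith/Hochster--Huneke containment $I^{(hn)} \subseteq I^{h}$ of Theorem~\ref{einHH}, but taken alone (the case $c = 0$) it only delivers $\alpha(I^{(hn)}) \geq h\,\alpha(I)$, hence the strictly weaker $\frac{\alpha(I^{(h)})}{h} \geq \frac{\alpha(I)}{n}$; the missing term $\frac{n-1}{n}$ — the actual content of Chudnovsky — is exactly the multiplier $\mathcal{M}^{h(n-1)}$ that Theorem~\ref{einHH} cannot see. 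Recovering that multiplier is where extra geometric structure must enter (for instance via explicit low-degree symbolic generators, a Macaulay inverse-system computation, or a multiplier-ideal refinement of the ELS argument), and it is the step I expect to resist any uniform proof.

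Finally, I would isolate the asymptotic shadow of the statement as the tractable core: letting $h \to \infty$ shows that the claim entails the Waldschmidt-constant inequality $\widehat{\alpha}(I) \geq \frac{\alpha(I)+n-1}{n}$. Securing this limiting form and then controlling the small-$h$ discrepancy by a subadditivity/bootstrapping argument is, in favorable cases, what lifts the bound back to the per-$h$ inequality demanded here.
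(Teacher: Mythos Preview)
The statement you were given is recorded in the paper as a \emph{Conjecture}: Chudnovsky's inequality remains open for an arbitrary finite set of points in $\mathbb{P}^n$, and the paper makes no claim to settle it in that generality. There is accordingly no ``paper's own proof'' of the statement as written. What the paper does prove is the special case in which $I$ defines the Complement of a Steiner configuration (Corollary~\ref{Chudnovky}); the argument there is a direct computation from the explicit values $\alpha(I)=v-n$ (Theorem~\ref{p. alpha(J_T^m)}) and $\widehat\alpha(I)=v/n$ (Corollary~\ref{coro2}), after which the bound collapses to the triviality $v/n\ge(v-1)/n$.

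Your reduction --- deriving the per-$h$ inequality from the Harbourne--Huneke containment $I^{(hn)}\subseteq\mathcal{M}^{h(n-1)}I^h$ via the chain $n\,\alpha(I^{(h)})\ge\alpha(I^{(hn)})\ge h(n-1)+h\,\alpha(I)$ --- is correct and standard, and you rightly flag that the containment itself is conjectural for general $X$. So you have not written a proof of the statement but an accurate account of the known implication and its obstruction; the gap you name is real and is precisely why the conjecture is still a conjecture. The paper's Remark following Corollary~\ref{demailly} runs exactly your argument in the restricted C-Steiner setting, with Proposition~\ref{prop} supplying the needed containment, so your approach and the paper's alternative route coincide there --- but for an arbitrary set of points neither you nor the paper closes the gap.
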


A generalization of Chudnovsky’s Conjecture is the following:
\begin{Conjecture} (Demailly's Conjecture). Suppose that $k$ is an algebraically closed field of characteristic
$0$. Let $I$ be the defining ideal of a set of points $X\subseteq \mathbb{P}^n_k$ and let $m \in \mathbb{N}$ be any integer. Then,
for all $h > 1$,
$$\frac{\alpha(I^{(h)})}{h}\geq \frac{\alpha(I^{(m)})+n-1}{m+n-1}.$$

\end{Conjecture}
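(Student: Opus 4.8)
The plan is to first reduce Demailly's inequality to a single statement about the Waldschmidt constant, and then to attack that statement through a sharp symbolic-power containment. Since the sequence $\alpha(I^{(h)})$ is subadditive — if $f\in I^{(a)}$ and $g\in I^{(b)}$ have minimal degree, then $fg\in I^{(a)}I^{(b)}\subseteq I^{(a+b)}$ — Fekete's lemma gives $\widehat{\alpha}(I)=\inf_{h}\alpha(I^{(h)})/h$, so that $\alpha(I^{(h)})/h\ge\widehat{\alpha}(I)$ for every $h>1$. Consequently it suffices to prove, for each fixed $m$, the estimate
\begin{equation}\label{star}
\widehat{\alpha}(I)\;\ge\;\frac{\alpha(I^{(m)})+n-1}{m+n-1}.
\end{equation}
Note that \eqref{star} at $m=1$ is exactly Chudnovsky's inequality, while the weaker bound $\widehat{\alpha}(I)\ge\alpha(I^{(m)})/(m+n-1)$ is the classical Esnault--Viehweg estimate; thus the entire difficulty is concentrated in improving the numerator of the Esnault--Viehweg bound by the additive constant $n-1$.

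To produce this improvement I would recast \eqref{star} as a containment. I claim \eqref{star} follows once one knows
\begin{equation}\label{dagger}
I^{((m+n-1)r)}\;\subseteq\;\mathcal{M}^{(n-1)r}\,\bigl(I^{(m)}\bigr)^{r}\qquad\text{for all }r\gg0.
\end{equation}
Indeed, applying $\alpha(-)$ to \eqref{dagger} and using $\alpha(J^{r})=r\,\alpha(J)$ for ordinary powers gives $\alpha\bigl(I^{((m+n-1)r)}\bigr)\ge(n-1)r+r\,\alpha(I^{(m)})$; dividing by $(m+n-1)r$ and letting $r\to\infty$ along the cofinal set where \eqref{dagger} holds yields precisely \eqref{star}. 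The task is therefore to establish the sharp containment \eqref{dagger}, in the spirit of the Stable Harbourne--Huneke Conjecture~\ref{SHHC}, but with the fat point ideal $I^{(m)}=I_{mX}$ playing the role of the regular power.

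The main obstacle is exactly \eqref{dagger}. Applying the Ein--Lazarsfeld--Smith/Hochster--Huneke theorem (Theorem~\ref{einHH}) and its Harbourne--Huneke refinement to the big-height-$n$ ideal $J:=I^{(m)}$, and using the identity $(I^{(m)})^{(t)}=I^{(mt)}$ valid for ideals of points, one only obtains $I^{(mnr)}\subseteq\mathcal{M}^{(n-1)r}(I^{(m)})^{r}$. Since $(m-1)(n-1)\ge0$ forces $m+n-1\le mn$, this produces only the weaker estimate $\widehat{\alpha}(I)\ge(\alpha(I^{(m)})+n-1)/(mn)$, which falls short of \eqref{star}; replacing the uniform factor $mn$ by the sharp $m+n-1$ is precisely the content of Demailly's conjecture and is open for an arbitrary set of points $X$. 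I would therefore proceed in the regimes where the extra structure forces \eqref{dagger}: for a general set of points one specializes from a configuration with explicitly known $\alpha(I^{(m)})$ and recovers the bound by semicontinuity of the initial degree; for $n=2$ the sharp containment follows from Nagata/Waldschmidt-constant methods; and for configurations whose symbolic powers are understood, such as the star and Steiner configurations of the present paper, \eqref{dagger} can be checked directly. For the fully general statement, however, I expect the decisive and unresolved step to remain the production of \eqref{dagger}, which is equivalent to the conjecture itself.
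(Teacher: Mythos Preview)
The statement you are attempting to prove is listed in the paper as a \emph{Conjecture}, not a theorem: the paper does not claim a proof of Demailly's inequality for an arbitrary set of points, and there is therefore no ``paper's own proof'' of this statement to compare against. Your proposal correctly diagnoses this. You reduce the conjecture to the containment $I^{((m+n-1)r)}\subseteq\mathcal{M}^{(n-1)r}(I^{(m)})^{r}$ and then observe that this containment is itself open in general and essentially equivalent to what is being conjectured. What you have written is thus a reformulation together with a survey of the regimes where partial methods apply, not a proof --- and you say so explicitly in your final sentence.

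What the paper \emph{does} prove is the special case of the Complement of a Steiner configuration (Corollary~\ref{demailly}). There the argument begins with your first reduction --- it suffices to check $\widehat{\alpha}(I)\ge(\alpha(I^{(m)})+n-1)/(m+n-1)$ for each $m$ --- but then diverges entirely from your containment strategy. The paper never establishes or even invokes an inclusion of the form $I^{((m+n-1)r)}\subseteq\mathcal{M}^{(n-1)r}(I^{(m)})^{r}$; instead it simply substitutes the exact value $\widehat{\alpha}(I)=v/n$ from Corollary~\ref{coro2} and the explicit formula for $\alpha(I^{(m)})$ from Theorem~\ref{p. alpha(J_T^m)}, and verifies the resulting numerical inequality by a short case analysis on the residue of $m$ modulo $n$. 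In other words, for the only class of ideals actually handled in the paper, the ``explicitly known $\alpha(I^{(m)})$'' route that you mention in passing is the whole proof, and the sharp symbolic-power containment you propose is neither needed nor obtained.
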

Two recent  preprints, \cite{BiGrTN,BiGrTN2}, focus on the Containment problem and related conjectures. In the first one, the authors show that Chudnovsky’s Conjecture holds for sufficiently many general points and to prove it they show that one of the containments conjectured by Harbourne and Huneke holds eventually, meaning for large powers (see Theorem 4.6 in \cite{BiGrTN}). They also show other related results, for example, that general sets of points have expected resurgence and thus satisfy the Stable Harbourne Conjecture.

In the second preprint, the authors show that Demailly's Conjecture (which is a generalization of Chudnovsky's) also holds for sufficiently many general points, for star configurations (in general, not just points) and for generic determinantal ideals.

In this paper we prove that the Stable Harbourne Conjecture and the Stable Harbourne--Huneke Conjecture hold for ideals defining the Complement of a Steiner Configuration of points in $\mathbb{P}^n$ that are special subsets of star configurations and, hence, far from being general. We will give more details in Section \ref{s. Harbourne conj}.

We remark that the least degree of a minimal generator of the ideal defining the Complement of a Steiner Configuration of points in $\mathbb{P}^n$ is strictly less than the least degree of a minimal generator of the ideal of a star configurations (see Theorem \ref{p. alpha(J_T^m)} and also  Proposition 2.9 in \cite{GHM}). So, it is worth investigating whether the Containment problem and its related conjectures hold for the Complement of a Steiner Configuration of points in $\mathbb{P}^n$.

In \cite{BFGM} the authors constructed a squarefree monomial ideal $J$ associated to a set $X$ of points in $\mathbb{P}^n$ constructed from the Complement of a Steiner system. The ideal $I_X$ defining the Complement of a Steiner system is not a monomial ideal. But the authors proved that the symbolic powers of $I_X$ and $J$ share the same homological invariants (see Proposition 3.6 in \cite{BFGM}). This was possible because $J$ is the Stanley-Reisner ideal of a matroid, so its symbolic powers define an arithmetically Cohen-Macaulay (ACM for short) scheme which gives, after proper hyperplane sections, the scheme of fat points supported on $X$.  But we point out that the {\em regular} powers of $J$ are not necessarily ACM  anymore and we cannot relate them to squarefree monomial ideals. Thus, the homological invariants of the regular powers of $J$ are not immediately correlated to that of $I_X$.

In \cite{CEH} the authors proved that the Chudnovsky’s Conjecture, the Harbourne’s Conjecture and the Harbourne--Huneke containment conjectures hold for squarefree monomial ideals. 

As previously remarked, since the ideal $I_X$ defining the Complement of a Steiner system is not a squarefree monomial ideal, we cannot recover the Stable Harbourne Conjecture and the Stable Harbourne -- Huneke Conjecture  using \cite{CEH}.

We also point out that the two above preprints \cite{BiGrTN,BiGrTN2} do not compute the Waldschmidt constant exactly for any class of ideals, they study lower bounds for the Waldschmidt constant. And, since in \cite{BFGM} the authors found the exact value of the Waldschmidt constant for the Complement of a Steiner configurations of points, then Chudnovsky and Demailly’s Conjectures easily follow for our class of ideals (see Section \ref{s. Harbourne conj}).

For other results on this topic we can also see \cite{FMX, Mantero}.

Another tool useful to measure the non--containment among symbolic and ordinary powers of ideals is the notion of {\it resurgence} $\rho(I)$ of an ideal $I$,  introduced in \cite{BH} that gives some notion of how small the ratio $m/r$ can be and still be sure to have $I^{(m)}\subseteq I^r$.
\begin{Definition}
	Let $I$ be a nonzero, proper ideal in a commutative ring $R$, the \textit{resurgence} of the ideal $I$ is given by $$\rho(I) = \sup\left\{\frac{m}{r} \quad \vert \quad I^{(m)} \nsubseteq I^r\right\}.$$
\end{Definition} 

It always satisfies $\rho(I) \geq 1$. The groundbreaking results of \cite{ELS,HoHu,MS} show that $\rho(I) \leq h$, where $h$ is the big height of the radical ideal $I$. This motivates us to ask whether $\rho(I)$ can strictly be less than its big height and which are some of the interesting consequences. Although there are few cases where the resurgence has been computed, in general, it is extremely difficult to estimate the exact value for $\rho(I)$. The reader can look at \cite{DHN+} for the first examples where the resurgence and the asymptotic resurgence are not equal. An asymptotic  version of the resurgence was introduced  in the paper \cite{GHaVT}. 
\begin{Definition}
	For a nonzero, proper homogeneous ideal $I \subseteq k[x_0,\dots,x_n]$, the \textit{asymptotic resurgence} $\rho_a(I)$ is defined as follows:$$\rho_a(I) = \sup\left\{\frac{m}{r}\quad \vert \quad I^{(mt)} \nsubseteq I^{rt}, \quad \text{for all} \quad t\gg0\right\}.$$
\end{Definition}

It is clear from the definition that $1 \leq \rho_a(I) \leq \rho(I)$. 
As pointed out in  \cite{HKZ},  DiPasquale,  Francisco,  Mermin, Schweig showed that $\rho_a(I) = sup\{m/r:~I^{(m)}\nsubseteq \overline{I^r}\}$, where $ \overline{I^r}$ is the integral closure of $I^r$ (see also \cite{DiPFMS} Corollary 4.14) .

In this paper we study the containment properties of the ideal defining a Complement of a Steiner configuration of points  in $\mathbb{P}^n$.
Section \ref{Prelim} is devoted to recall notation, definitions and known results from \cite{BFGM} that we will use in the next sections.
The main result of Section \ref{s. Harbourne conj} is Theorem \ref{t. Stable main} where we prove that an ideal defining the Complement of a Steiner Configuration of points in $\mathbb{P}^n$ satisfies both the Stable Harbourne Conjecture and the Stable Harbourne--Huneke Conjecture. In Lemma \ref{l. main lemma}, we give a criterion for when the resurgence number can be computed in finite number of steps. This result improves the bounds found in Corollary 4.8 in \cite{BFGM}. We also point out that  Lemma \ref{l. main lemma} is similar to results from \cite{DD, DiPFMS}.  As a consequence, in Corollary \ref{expected} we  show that the ideal of a Complement of a Steiner Configuration of points has expected resurgence, that is, its resurgence is strictly less than its big height (see \cite{GrHuM} for the first definition). Moreover, using Theorem \ref{p. alpha(J_T^m)}, Corollaries \ref{coro} and \ref{coro2}, we show that the ideal of a Complement of a Steiner Configuration of points satisfies Chudnovsky and Demailly's Conjectures (see Corollary \ref{coro2}, Corollary \ref{Chudnovky} and Corollary \ref{demailly}).

Finally, in Section \ref{s.coloring}, given a hypergraph ${H}$, we also study the relation between  its colourability  and the failure of the Containment problem for the cover ideal associated to $H$.  The ideas come from the paper \cite{FHVT2} where the authors start to study the natural one-to-one correspondence between squarefree monomial ideals and finite simple hypergraphs via the cover ideal construction. 

There exists an extensive literature on the subject of colourings both from Design Theory and Algebraic Geometry/Commutative Algebra point of view. Among all, we make use of  \cite{BTV, FHVT2, HVT, GMV} as some referring texts for preliminaries on hypergraph theory and associated primes and  for an algebraic method to compute the chromatic number, respectively. 

Most of the existing papers are devoted to the case of weak
colourings (or vertex colourings), i.e. colourings where the colours are assigned to the elements in such a way that no hyperedge is monochromatic (i.e. no hyperedge has all its elements assigned the same colour). The reader can see \cite{BTV} or  Chapter 3 in \cite{GMV} for other different types of colouring a hypergraph, such as {\em strong vertex colouring, vertex equicolouring, good colouring of $H$}.

In this paper we use the case of  weak colouring to get result on the Containment problem since it is the one commonly used in Combinatorial Commutative Algebra.
The main result of this section is Theorem \ref{t.containement fails} that more generally predicts the failure of the containement for squarefree monomial ideals based on the definition of {\it coverability} (see Definition \ref{d. d-cover}). We apply these results in the case that $H$ is a Steiner System.

We end the paper recalling some open questions posed in \cite{BFGM} and that are still under investigations and posing  new ones as possible further research problems.

\section{Notation, definitions and known results for Ideals of a Steiner configuration of points and its Complement}\label{Prelim}

In this section  we recall  the main results from \cite{BFGM}, where the authors studied the  homological properties of ideals constructed from Steiner systems, especially in the zero dimensional case of $\mathbb P^n$.

A Steiner system $(V,B)$ of type  $S(t,n,v)$  is 
a collection $B$ of $n$-subsets (blocks) of a $v$-set $V$ such that each $t$-tuple of  $V$ is contained in a unique block in $B$.  The elements in $V$ are called vertices or points and those of $B$ are called blocks.  In particular,  a Steiner triple system of order $v$, $STS(v)$, is a collection of triples ($3$-subsets) of $V$, such that each unordered pair of elements is contained in precisely one block, and a Steiner quadruple system of order $v$, $SQS(v)$, is a collection of quadruples (4-subsets) of $V$ such that each triple is found in precisely one block. 

The existence of a Steiner system strongly depends on the parameters $(t,n,v)$. If a Steiner system $(V,B)$ of type $S(t,n,v)$ exists, then \[|B|=\frac{{\binom{v}{t}}}{{\binom{n}{t}}}.\]

We use \cite{Colbourn} and \cite{ColbournRosa} as main references for all the background on design theory.

We recall the most known example.
\begin{Example}\label{e.STS(7)_1}
		  One of the simplest and most known example of Steiner system is the Fano Plane. It is unique up to isomorphism and it is a  Steiner system $S(2,3,7)$ with block set
		$$B:=\{\{ 1,2,3\}, \{3,4,5\},\{3,6,7\},\{1,4,7\},\{2,4,6\},\{2,5,7\},\{1,5,6\}\}.$$
		\end{Example}
		
For the ease of the reader, we recall some definitions and results from \cite{BFGM}.

Let  $V:=\{1, \ldots, v\}$  and $\mathcal H:=\{H_1, \ldots H_v\}$ be a collection of distinct hyperplanes  of $\mathbb P^n$, where $n\leq v$. Say $H_j$ defined by the linear forms $\ell_j$  for $j=1, \dots, n$. Assume that any $n$ hyperplanes in $\mathcal H$ meet properly, i.e., they meet in a point. 
There is a natural way to associate a point in $\mathbb P^n$ to a subset of $n$ elements of $V$.  For  $\sigma:=\{\sigma_1, \ldots, \sigma_n\}\subseteq V$, we denote by $P_{\mathcal H, \sigma}$  the point obtained by intersecting the hyperplanes $H_{\sigma_1},\ldots, H_{\sigma_n}.$ Then, the ideal $I_{P_{\mathcal H, \sigma}}=(\ell_{\sigma_1},\ldots, \ell_{\sigma_n})\subseteq k[\mathbb P^n]$ is  the vanishing ideal of the point $P_{\mathcal H, \sigma}$.

\begin{Definition}\label{Steinergeneral}Let $Y$ be a collection of subsets of $V$ containing $n$ elements, and $\mathcal H$ a set of hyperplanes meeting properly. We define the following set  of points in $\mathbb P^n$  with respect to $\mathcal H$  
	$$X_{\mathcal H,Y}:=\bigcup_{\sigma\in Y}~ P_{\mathcal H, \sigma}$$ and  its defining ideal $$I_{X_{\mathcal H,Y}}:=\bigcap_{\sigma\in Y} ~  I_{P_{\mathcal H, \sigma}}.$$
\end{Definition}

Denoted by $C_{(n,v)}$  the set containing all the subsets of $V$ with $n$ elements the above definition applied to a Steiner system gives us two different sets of points. 

\begin{Definition}\label{Steiner}  Let $(V,B)$ be a Steiner system of type $S(t,n,v)$ with $t<v\leq n$. We associate to $B$ the following set  of points in $\mathbb P^n$   $$X_{\mathcal H,B}:=\bigcup_{\sigma\in B } ~ P_{\mathcal H, \sigma}$$ and  its defining ideal $$I_{X_{\mathcal H,B}}:=\bigcap_{\sigma\in B} ~ I_{P_{\mathcal H, \sigma}}.$$
 We call $X_{\mathcal H, B}$ the  Steiner configuration of points associated to the Steiner system $(V,B)$ of type $S(t,n,v)$ with respect to $\mathcal H$ (or  just $X_{ B}$  if there is no ambiguity).
\end{Definition}

\begin{Definition}\label{C-Steiner}
 Let $(V,B)$ be a Steiner system of type $S(t,n,v)$  with $t<n\leq v$. We associate to $C_{(n,v)}\setminus B$ the following set  of points in $\mathbb P^n$   $$X_{\mathcal H,C_{(n,v)}\setminus B}:=\bigcup_{\sigma\in C_{(n,v)}\setminus B}~ P_{\mathcal H, \sigma}$$ and  its defining ideal $$I_{X_{\mathcal H,C_{(n,v)}\setminus B}}:=\bigcap_{\sigma\in C_{(n,v)}\setminus B} ~ I_{P_{\mathcal H, \sigma}}.$$
We call $X_{\mathcal H,C_{(v,n)}\setminus B}$ the  Complement of a Steiner configuration of points  with respect to $\mathcal H $  (or C-Steiner $X_{\text{C}}$ if there is no ambiguity).
\end{Definition}
As pointed out in \cite{BFGM},  Remarks 2.5 and 2.6  a Steiner configuration of points and its Complement are subschemes of a star configuration of ${\binom{v}{n}}$  points in $\mathbb{P}^n$ (see \cite{CCGVT, CGVT,GHM, GHMN} just to cite some reference on star configurations).

We also have
$$\deg X_{\mathcal H, C_{(n,v)}\setminus B}= {\binom{v}{t}}- |B|= {\binom{v}{t}}- \dfrac{{\binom{v}{t}}}{{\binom{n}{t}}}.$$

We recall the most known construction of Steiner Configuration of points and its Complement.

\begin{Example}\label{e.STS(7)}
		Consider  the Steiner configuration associated to $(V,B)$ of type $S(2,3,7)$  as in  Example \ref{e.STS(7)_1}. Take $\mathcal H :=\{H_1, \ldots, H_7 \}$  a collection of $7$ distinct hyperplanes $H_i$ in  $\mathbb P^3$   defined by a linear form $\ell_i$ for $i=1,\dots,7$, respectively, with the property that any $3$ of them meet in a point $P_{\mathcal H,\sigma}=H_{\sigma_1}\cap H_{\sigma_2}\cap H_{\sigma_3}$, where $\sigma=\{\sigma_1,\sigma_2,\sigma_3\}\in B$. We get  that $X_{\mathcal H,C_{(3,7)}}$ is a  {\em star configuration} of ${\binom{7}{3}}=35$ points in $\mathbb{P}^3$,  $X_{\mathcal H,B}:=\cup_{\sigma \in B}~ \{P_{\mathcal H, \sigma} \}$ is  a Steiner configuration consisting of $7$ points in $\mathbb P^3$ and  $X_{\mathcal H, C_{(3,7)}\setminus B}$ is  a C-Steiner configuration  consisting of ${\binom{7}{3}}-7= 28$ points in $\mathbb P^3.$  Their defining ideals are respectively, $$I_{X_{\mathcal H,B}}:=\cap_{\sigma\in B} ~ I_{P_{\mathcal H, \sigma}} \textrm{ and } I_{X_{\mathcal H,C_{(3,7)}\setminus B}}:=\cap_{\sigma\in C_{(3,7)}\setminus B} ~ I_{P_{\mathcal H, \sigma}}.$$
	\end{Example} 

In \cite{BFGM} the authors constructed a squarefree monomial ideal $J$ associated to a set $X_{\mathcal H, C}$ of points in $\mathbb{P}^n$ constructed from the Complement of a Steiner system. The ideal $I_{X_{\mathcal H, C}}$ defining the Complement of a Steiner system is not a monomial ideal. But the authors proved that the symbolic powers of $I_{X_{\mathcal H, C}}$ and $J$ share the same homological invariants (see Proposition 3.6 in \cite{BFGM}). 

The following results give the least degree of a minimal generator and the regularity and the Waldschmidt constant of an ideal defining the Complement of a Steiner configuration of points, respectively.

\begin{Theorem}[\cite{BFGM}, Theorem 3.9]\label{p. alpha(J_T^m)}
	Let $(V,B)$ be  a Steiner system of type $S(t,n,v)$.  Set $I_{X_{\mathcal H,C}}:=I_{X_C}$ the ideal defining the Complement of the Steiner configuration of points associated to $S(t,n,v)$. Then
	\begin{itemize}
		\item[i)] $\alpha(I_{X_{C}})=v-n$;
		\item[ii)] $\alpha(I_{X_{C}}^{(q)})=v-n+q$ for $2\le q<n$;
		\item[iii)] $\alpha(I_{X_{C}}^{(m)})=\alpha(I_{X_{C}}^{(q)})+pv$, where $m=pn+q$ and $0\le q<n$ and $\alpha(I_{X_{C}}^{(n)}) =\alpha(I_{X_{C}}^{(0)}) +v=v$.  
	\end{itemize} 
\end{Theorem}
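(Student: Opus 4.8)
The plan is to transport the computation to the squarefree monomial ideal $J=\bigcap_{\sigma\in C_{(n,v)}\setminus B}(y_i:i\in\sigma)\subseteq k[y_1,\dots,y_v]$ attached to $C_{(n,v)}\setminus B$ in \cite{BFGM}. By Proposition~3.6 of \cite{BFGM} the symbolic powers $J^{(m)}$ and $I_{X_C}^{(m)}$ have the same graded Betti numbers up to a general linear section (the point being that each $J^{(m)}$ is Cohen--Macaulay, being a symbolic power of the Stanley--Reisner ideal of a matroid, and a general hyperplane section of $\op{Proj}(k[y]/J^{(m)})$ cuts out the fat point scheme $mX_C$); in particular the minimal generators of the two ideals sit in the same degrees, so $\alpha(I_{X_C}^{(m)})=\alpha(J^{(m)})$. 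Now the monomial membership criterion identifies $J^{(m)}=\bigcap_{\sigma}(y_i:i\in\sigma)^m$ with the monomials $\prod_i y_i^{a_i}$ satisfying $\sum_{i\in\sigma}a_i\ge m$ for every $\sigma\in C_{(n,v)}\setminus B$, so
\[
\alpha\big(I_{X_C}^{(m)}\big)=\min\Big\{\,\sum_{i=1}^{v}a_i\ \Big|\ a\in\ZZ_{\ge0}^{v},\ \textstyle\sum_{i\in\sigma}a_i\ge m\ \text{for every } \sigma\in C_{(n,v)}\setminus B\,\Big\}=:\mathrm{IP}(m),
\]
and the whole statement reduces to evaluating this integer program. (The inequality $\alpha(I_{X_C}^{(m)})\le\mathrm{IP}(m)$ is in fact immediate without \cite{BFGM}, since $\prod_i\ell_i^{a_i}$ has multiplicity $\sum_{i\in\sigma}a_i$ at $P_{\mathcal H,\sigma}$.) Write $\phi(m)$ for the value asserted in (i)--(iii); observe that (iii) is exactly the recursion $\phi(M)=\phi(M-n)+v$ for $M\ge n$, together with $\phi(0)=0$, $\phi(1)=v-n$ and $\phi(q)=v-n+q$ for $2\le q\le n$.

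For $\mathrm{IP}(m)\le\phi(m)$ I would exhibit explicit feasible vectors. Write $m=pn+q$, $0\le q<n$. If $q=0$ take $a\equiv p$, of weight $pv$. If $q\ge1$, fix a block $\beta\in B$ and a set $Z\subseteq\beta$ with $|Z|=n-q$ if $q\ge2$ and $Z=\beta$ if $q=1$, and set $a_i=p$ for $i\in Z$, $a_i=p+1$ otherwise. As $\sigma\notin B$ meets $\beta$ in at most $n-1$ points, one checks $n-|\sigma\cap Z|\ge q$ in either case, so $\sum_{i\in\sigma}a_i=pn+\big(n-|\sigma\cap Z|\big)\ge pn+q=m$; and $\sum_i a_i=pv+v-|Z|$ equals $\phi(m)$.

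For the reverse inequality $\mathrm{IP}(m)\ge\phi(m)$ I would argue by strong induction on $m$. Let $a$ be feasible with coordinates sorted $a_1\le\cdots\le a_v$ and $c:=a_1$. If $c\ge1$: should $cn\ge m$, then $c\ge\lceil m/n\rceil$ and $\sum_i a_i\ge vc\ge v\lceil m/n\rceil\ge\phi(m)$; otherwise $a-c(1,\dots,1)$ is feasible for $\mathrm{IP}(m-cn)$ with $0<m-cn<m$, and the inductive hypothesis together with the recursion gives $\sum_i a_i=\sum_i(a_i-c)+vc\ge\phi(m-cn)+cv=\phi(m)$. The genuinely delicate case is $c=0$, where the zero set $K=\{i:a_i=0\}$ is nonempty. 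Here the key input is the defining property of the Steiner system in the sharp form: \emph{every $(n-1)$-subset of $V$ lies in at most one block} (true because $t\le n-1$). This forces $|K|\le n$; completing $K$ to an $(n-1)$-set $K'$ of weight $s$, it forces all but at most one of the $v-n+1$ sets $K'\cup\{j\}$, $j\notin K'$, to be non-blocks, whence $a_j\ge m-s$ for at least $v-n$ of those $j$. A case analysis on these data --- according to whether $s\ge q$ and whether the $n$ smallest coordinates span a block, with the extreme case $|K|=n$ (where $K$ must be a block) treated on its own --- yields $\sum_i a_i\ge\phi(m)$; the counting inequality $\mathrm{IP}(m)\ge\lceil mv/n\rceil$ --- obtained by double-counting vertex/non-block incidences and using that a Steiner system is balanced, so $\binom vn-|B|=\tfrac vn\big(\binom{v-1}{n-1}-r\big)$ with replication number $r=\binom{v-1}{t-1}/\binom{n-1}{t-1}$ --- settles the case $n\mid m$ and simplifies several subcases.

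I expect the $c=0$ branch of the lower bound to be the main obstacle: a feasible vector with several zero coordinates is precisely what must be excluded to get past the weak counting bound $\lceil mv/n\rceil$, and this is where the Steiner structure (rather than the ambient star-configuration structure, for which $\alpha$ of the symbolic powers is strictly larger) is really used. Organizing that case analysis --- and dealing with the small-parameter anomaly at $q=1$ and the degenerate parameter $v=n+1$ --- is the only laborious part; the rest is the formal bookkeeping above.
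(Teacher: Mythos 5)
First, a remark on the comparison itself: this paper does not prove the statement --- it is imported verbatim from \cite{BFGM} (Theorem 3.9) --- so your attempt can only be measured against the proof in that source. Your overall strategy matches it: pass to the squarefree monomial ideal $J$ attached to $C_{(n,v)}\setminus B$, use Proposition 3.6 of \cite{BFGM} (Cohen--Macaulayness of symbolic powers of the Stanley--Reisner ideal of a matroid, plus general hyperplane sections) to get $\alpha(I_{X_C}^{(m)})=\alpha(J^{(m)})$, and then evaluate the integer program $\mathrm{IP}(m)$. You correctly note that only the inequality $\alpha(I_{X_C}^{(m)})\ge \mathrm{IP}(m)$ needs this transfer. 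Your upper bound is complete and correct: the vectors equal to $p$ on a subset $Z$ of a block with $|Z|=n-q$ (resp.\ $|Z|=n$ when $q=1$) and $p+1$ elsewhere are feasible because a non-block meets a block in at most $n-1$ points, and their weights are exactly the asserted values. The peeling induction for feasible vectors with $\min_i a_i\ge 1$ is also fine.

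The gap is where you yourself locate it, and it is not merely laborious bookkeeping: the case $\min_i a_i=0$. Your two combinatorial inputs are correct (the zero set $K$ has at most $n$ elements since any two $n$-subsets of an $(n+1)$-set share $n-1\ge t$ points; an $(n-1)$-set lies in at most one block), but the estimate you propose to extract from them, $\sum_i a_i\ge s+(v-n)(m-s)+a_{j_0}$ with $s$ the weight of an $(n-1)$-set $K'\supseteq K$, does not close the argument. That expression is decreasing in $s$, and already for $m=n+1$ (so $p=1$, $q=1$, target $\phi(m)=2v-n$) and $s=m-1$ it gives only about $v+1$, which is far below $2v-n$ for large $v$; the deficit must be recovered by showing that a large $s$ carried by the $n-1$ smallest coordinates forces all $v-n+1$ remaining coordinates to be correspondingly large, i.e.\ by a genuinely finer analysis or a different induction that peels off a block's worth of small coordinates at a time. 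The auxiliary bound $\mathrm{IP}(m)\ge\lceil mv/n\rceil$ (which you derive correctly) only settles the case $n\mid m$, since $\lceil qv/n\rceil$ can be much smaller than $v-n+q$. So as written this is a correct reduction plus a correct half of the computation, together with a plan --- not a proof --- for the lower bound in parts (ii) and (iii); that lower bound is precisely the content of Theorem 3.9 of \cite{BFGM}.
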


\begin{Corollary}[\cite{BFGM}, Corollary 4.2]\label{coro}
Let $\text{reg}(I_{X_C})$ be the regularity of a  Complement of a Steiner configuration. Then $\text{reg}(I_{X_C})=\alpha(I_{X_C})+1=v-n+1$.
\end{Corollary}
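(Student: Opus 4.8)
The plan is to reduce $\reg(I_{X_C})$ to the regularity index of the Hilbert function of $R/I_{X_C}$, and then to locate that index exactly at $v-n$ by playing the vanishing coming from Theorem~\ref{p. alpha(J_T^m)}(i) off against the fact that $X_C$ lies inside a star configuration whose Hilbert function is classical. For the reduction I would use that $X_C$ is a reduced zero-dimensional scheme which, by the matroid description of \cite{BFGM} (Proposition~3.6 there), is arithmetically Cohen--Macaulay; hence $R/I_{X_C}$ is Cohen--Macaulay of Krull dimension $1$, so $\reg(I_{X_C})=\reg(R/I_{X_C})+1$ and $\reg(R/I_{X_C})$ equals the regularity index $\sigma:=\min\{t\mid H_{R/I_{X_C}}(t)=\deg X_C\}$ (equivalently, the degree of the $h$-polynomial of $R/I_{X_C}$). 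So it is enough to show $\sigma=v-n$.

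For the inequality $\sigma\ge v-n$ I would invoke $\alpha(I_{X_C})=v-n$ from Theorem~\ref{p. alpha(J_T^m)}(i): this forces $(I_{X_C})_t=0$ for $t\le v-n-1$, hence $H_{R/I_{X_C}}(v-n-1)=\dim_k R_{v-n-1}=\binom{v-1}{n}$. Since $\deg X_C=\binom{v}{n}-|B|$ and $\binom{v}{n}-\binom{v-1}{n}=\binom{v-1}{n-1}$, it remains to check the elementary inequality $|B|<\binom{v-1}{n-1}$; writing $|B|=\binom{v}{t}/\binom{n}{t}$ and clearing binomial coefficients, this amounts to $v<n\binom{v-t}{n-t}$, which holds because $\binom{v-t}{n-t}\ge v-t$ and $v(n-1)>nt$ (using $v\ge n+1$, $t\le n-1$, $n\ge 2$). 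Thus $H_{R/I_{X_C}}(v-n-1)<\deg X_C$, so $\sigma\ge v-n$.

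For the inequality $\sigma\le v-n$ I would use that $X_C$ is a subscheme of the star configuration $X_{\mathrm{star}}:=X_{\mathcal H,C_{(n,v)}}$ of all $\binom{v}{n}$ points $P_{\mathcal H,\sigma}$ (Remarks~2.5--2.6 of \cite{BFGM}). It is classical (see \cite{GHM}) that $H_{R/I_{X_{\mathrm{star}}}}(t)=\min\{\binom{t+n}{n},\binom{v}{n}\}$, so $H_{R/I_{X_{\mathrm{star}}}}(v-n)=\binom{v}{n}=\deg X_{\mathrm{star}}=\dim_k R_{v-n}$; that is, $X_{\mathrm{star}}$ imposes independent conditions on forms of degree $v-n$, and therefore so does its subscheme $X_C$, giving $H_{R/I_{X_C}}(v-n)=\deg X_C$ and $\sigma\le v-n$. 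Combining the two bounds, $\sigma=v-n$, whence $\reg(I_{X_C})=v-n+1=\alpha(I_{X_C})+1$ by Theorem~\ref{p. alpha(J_T^m)}(i).

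I expect the only real obstacle to be the inequality $\sigma\le v-n$: the elementary considerations confine $H_{R/I_{X_C}}(v-n)$ only to the range between $\binom{v-1}{n}$ and $\deg X_C$, so one needs a genuinely non-trivial input to see that the Hilbert function has already stabilized in degree $v-n$, and the ambient star configuration supplies it. An alternative that never leaves \cite{BFGM}: by Proposition~3.6 the graded Betti numbers of $R/I_{X_C}$ coincide with those of the Stanley--Reisner ring $k[\Delta]$ of the associated matroid $\Delta$; since $k[\Delta]$ is Cohen--Macaulay of Krull dimension $v-n$, its regularity is the degree of its $h$-polynomial, at most $v-n$, with equality because $\Delta$ has no coloop --- for every $i\in V$ only $\binom{v-1}{t-1}/\binom{n-1}{t-1}<\binom{v-1}{n-1}$ of the $n$-subsets of $V$ containing $i$ are blocks, so some $\sigma\in C_{(n,v)}\setminus B$ contains $i$.
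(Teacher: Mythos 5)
Your argument is correct, but note that the paper offers no proof of this statement at all: it is quoted verbatim as Corollary~4.2 of \cite{BFGM}, so the only comparison available is with the route that reference presumably takes, namely reading off the regularity from the minimal free resolution (equivalently the $h$-vector) of the Stanley--Reisner ring of the associated matroid --- which is essentially your ``alternative that never leaves \cite{BFGM}.'' Your primary route is genuinely different and self-contained: the reduction to the regularity index is standard (any reduced zero-dimensional subscheme of $\mathbb{P}^n$ is ACM, so the appeal to Proposition~3.6 of \cite{BFGM} there is unnecessary, though harmless); the lower bound $\sigma\ge v-n$ correctly reduces, via $\binom{v}{t}/\binom{n}{t}=\binom{v}{n}/\binom{v-t}{n-t}$, to $v<n\binom{v-t}{n-t}$, which holds for every admissible $(t,n,v)$ with $t<n<v$; and the upper bound $\sigma\le v-n$ needs only that $\alpha(I_{X_{\mathrm{star}}})=v-n+1$ together with the numerical coincidence $\dim_k R_{v-n}=\binom{v}{n}=\deg X_{\mathrm{star}}$, so the full star configuration imposes independent conditions in degree $v-n$ and hence so does any subset of it. (Incidentally, your computation $\deg X_C=\binom{v}{n}-|B|$ is the right one; the displayed formula $\binom{v}{t}-|B|$ in Section~2 of the paper is a typo, as Example~2.6 confirms.) What your geometric route buys is independence from the matroid machinery, at the price of importing the classical postulation of star configurations from \cite{GHM}; the matroid route buys a purely combinatorial proof but requires the fact that a coloop-free matroid complex has $h_d\neq 0$. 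Either way the argument is complete.
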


\begin{Corollary}[\cite{BFGM}, Corollary 3.12]\label{coro2}
If $(V, B)$ is a Steiner system of type $S(t, n, v)$, then the Waldschmidt constant of its Complement is $\widehat{\alpha}(I)=\frac{v}{n}$.
\end{Corollary}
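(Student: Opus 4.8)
The plan is to read off $\widehat{\alpha}(I_{X_C})$ directly from the closed formula for $\alpha(I_{X_C}^{(m)})$ recorded in Theorem \ref{p. alpha(J_T^m)}, so that the only extra input is the definition $\widehat{\alpha}(I)=\lim_{m\to\infty}\alpha(I^{(m)})/m$ together with the fact (noted in the introduction, and a consequence of the subadditivity $\alpha(I^{(a+b)})\le\alpha(I^{(a)})+\alpha(I^{(b)})$ coming from $I^{(a)}I^{(b)}\subseteq I^{(a+b)}$) that this limit exists.

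Write $I:=I_{X_C}$ and, for each $m$, perform the Euclidean division $m=pn+q$ with $p=\lfloor m/n\rfloor$ and $0\le q<n$. By Theorem \ref{p. alpha(J_T^m)}(iii) we have $\alpha(I^{(m)})=\alpha(I^{(q)})+pv$, where the remainder $\alpha(I^{(q)})$ depends only on the residue $q\in\{0,1,\dots,n-1\}$: by parts (i) and (ii) it equals $0$ when $q=0$, equals $v-n$ when $q=1$, and equals $v-n+q$ when $2\le q\le n-1$. In particular $\alpha(I^{(q)})$ stays bounded by a constant $C$ independent of $m$. Then
\[
\frac{\alpha(I^{(m)})}{m}=\frac{\lfloor m/n\rfloor}{m}\,v+\frac{\alpha(I^{(q)})}{m},
\]
and letting $m\to\infty$ the first summand tends to $v/n$ while $0\le\alpha(I^{(q)})/m\le C/m\to0$. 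Hence $\widehat{\alpha}(I)=v/n$.

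I do not expect a genuine obstacle here: all of the real work has been packed into Theorem \ref{p. alpha(J_T^m)}, whose proof in turn rests on the identification of the symbolic powers of $I_{X_C}$ with those of the Stanley--Reisner ideal $J$ of a matroid. If one prefers to avoid even the division argument, an equivalent route is to use $\widehat{\alpha}(I)=\inf_m\alpha(I^{(m)})/m$ (Fekete's lemma applied to the subadditive sequence $m\mapsto\alpha(I^{(m)})$): evaluating at $m=n$ and using $\alpha(I^{(n)})=v$ from Theorem \ref{p. alpha(J_T^m)}(iii) gives $\widehat{\alpha}(I)\le v/n$, while the bound $\alpha(I^{(m)})\ge pv=\lfloor m/n\rfloor v$ from the same part yields $\widehat{\alpha}(I)\ge v/n$ in the limit. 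Either way the corollary is an immediate consequence of the formula for $\alpha(I_{X_C}^{(m)})$ established in Theorem \ref{p. alpha(J_T^m)}.
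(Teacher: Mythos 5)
Your derivation is correct: the paper itself states this corollary only as a citation of \cite{BFGM}, Corollary 3.12, and your computation of the limit from the closed formula in Theorem \ref{p. alpha(J_T^m)} (with the bounded remainder term $\alpha(I^{(q)})/m\to 0$) is exactly the expected argument, as is the alternative via $\widehat{\alpha}(I)=\inf_m \alpha(I^{(m)})/m$. No gaps; all the substantive content is indeed already contained in the cited Theorem \ref{p. alpha(J_T^m)}.
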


\section{ Asymptotic resurgence and Stable Harbourne Conjecture}\label{s. Harbourne conj}

Containment problems have been of interest among commutative algebraists and algebraic geometers. In the last decade, several conjectures related to this problem have been posed creating an active area of  current interests and outgoing investigations.

A celebrated result of \cite{ELS, HoHu, MS} is that $I^{(hn)} \subseteq I^n$ for all $n \in \mathbb{N}$, whenever $h$ is the big height of $I$. One could hope to sharpen the containment by reducing the symbolic power on the left hand side by a constant or increasing the ordinary power on the right hand side by a fixed constant. This motivates us to look at the Stable Harbourne Conjecture and the Stable Harbourne -- Huneke Conjecture and study which class of ideals satisfies them. Here, we prove that the ideal defining a Complement of a Steiner configurations of points satisfies both conjectures. We need to recall some known results.

In \cite{Gr}, the Conjecture \ref{SHC} is shown to hold

\begin{enumerate}
    \item if there exists $k > 0$ such that $I^{(hk-h)} \subseteq I^k$;
    \item if  $I^{(hk-h+1)}\subseteq I^k$ for some $k$ and $I^{(r+h)}\subseteq I I^{(r)}$ for all $r\geq k$;
    \item if the resurgence satisfies $\rho(I) < h$.
\end{enumerate}

In particular, condition (2) gives a criterion for the Stable Harbourne Conjecture (SHC for short) to hold. Namely, for a radical ideal of big height $h$, if for all $k\geq 1$, it is  $I^{(k+h)} \subseteq II^{(k)}$ and fix an integer $C$ and $m$ such that $I^{(hm - C)} \subseteq I^m$ holds, then for all $q \geq m$, we have
 $$I^{(hq-C)} = I^{(h(q-m)+ hm-h+h-C)} \subseteq II^{(h(q-m-1)+hm-C)} \subseteq I^{q-m}I^{(hm-C)} \subseteq I^{q-m}I^m = I^q,$$ 
\noindent that is,  $I^{(hq-C)} \subseteq I^q$.

\begin{Theorem}[Theorem 2.5, \cite{Gr}]\label{eloisa}
Let $R$ be a regular ring containing a field, and let $I$ be a radical ideal in $R$ with big height $h$. If  $I^{(h(m-1))} \subseteq I^{m}$ \noindent for some $m\geq 2$, then $I^{(h(k-1))} \subseteq I^{k}$ \noindent for all $k>>0$ (indeed for all $k\geq hm$).
\end{Theorem}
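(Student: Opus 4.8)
The plan is to promote the single hypothesis $I^{(h(m-1))}\subseteq I^{m}$ to an infinite family of containments whose exponent ratio is strictly below $h$, and then to interpolate; in the language of the list above this is the implication (1)$\,\Rightarrow\,$Conjecture~\ref{SHC}, obtained in the sharp form where one also controls the asymptotic resurgence. The only ingredient that is not formal manipulation of symbolic powers is a refinement of Theorem~\ref{einHH}: for a radical ideal of big height $h$ in a regular ring \emph{containing a field} --- which is precisely where that standing assumption is used --- one has, for all $j\ge 0$, $n\ge 1$, $s\ge 0$,
$$I^{(\,hn+jn+hs\,)}\ \subseteq\ \bigl(I^{(j+1)}\bigr)^{n}\,I^{s}.$$
In equicharacteristic $0$ this comes from the Skoda-type bounds and subadditivity for the asymptotic multiplier ideals attached to the graded family $\{I^{(t)}\}_{t}$ that underlie the proof of Theorem~\ref{einHH}; in positive characteristic it comes from the analogous Frobenius-power / tight-closure argument of Hochster--Huneke. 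The case $j=s=0$ is Theorem~\ref{einHH} itself.

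\textbf{Step 1 (bootstrap).} Since $m\ge 2$ we may take $j:=h(m-1)-1\ge 0$, so that $j+1=h(m-1)$ and $hn+jn+hs=(hm-1)n+hs$. Raising the hypothesis to the $n$-th power gives $\bigl(I^{(h(m-1))}\bigr)^{n}\subseteq I^{mn}$, and feeding this into the displayed refinement,
$$I^{(\,(hm-1)n+hs\,)}\ \subseteq\ \bigl(I^{(h(m-1))}\bigr)^{n}I^{s}\ \subseteq\ I^{mn}I^{s}\ =\ I^{mn+s}\qquad(n,s\ge 0).$$
In particular $I^{((hm-1)n)}\subseteq I^{mn}$ for every $n\ge1$; since $\tfrac{hm-1}{m}=h-\tfrac1m<h$, the usual floor estimate shows that no ratio exceeding $h-\tfrac1m$ can be a witness for the asymptotic resurgence, so $\rho_{a}(I)\le h-\tfrac1m<h$ and $I$ has expected resurgence.

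\textbf{Step 2 (interpolation).} Fix $k\ge hm$ and write $k=mn+s$ with $0\le s\le m-1$; then $mn=k-s\ge hm-(m-1)$, which forces $n\ge h$. By Step~1, $I^{((hm-1)n+hs)}\subseteq I^{mn+s}=I^{k}$, and the elementary identity
$$h(k-1)-\bigl((hm-1)n+hs\bigr)\ =\ n-h\ \ge\ 0$$
gives $I^{(h(k-1))}\subseteq I^{((hm-1)n+hs)}$ by monotonicity of symbolic powers. Chaining the two inclusions yields $I^{(h(k-1))}\subseteq I^{k}$, as required. (If one only wants $k\gg 0$, the residue bookkeeping is unnecessary: any integer $t$ in the interval $\bigl[\tfrac km,\tfrac{h(k-1)}{hm-1}\bigr]$ --- nonempty for $k\ge hm$ and containing an integer once $k$ is large --- already gives $I^{(h(k-1))}\subseteq I^{((hm-1)t)}\subseteq I^{mt}\subseteq I^{k}$.)

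I expect essentially all of the difficulty to be concentrated in the non-formal ingredient: isolating the refined containment above from the multiplier-ideal proof of Ein--Lazarsfeld--Smith in the precise shape needed, and checking that it survives the reduction to positive characteristic. Granting that, Steps~1 and 2 are just careful bookkeeping with floors and residues --- and it is exactly that bookkeeping which pins the threshold at the clean value $k\ge hm$ rather than at some larger, less memorable bound.
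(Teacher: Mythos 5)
Your argument is correct, and it is essentially the approach the paper itself relies on: the ``refined containment'' $I^{(hn+jn+hs)}\subseteq (I^{(j+1)})^{n}I^{s}$ that you flag as the only non-formal ingredient is precisely Johnson's theorem \cite{J} (take $n+s$ factors with $a_{1}=\dots=a_{n}=j$ and $a_{n+1}=\dots=a_{n+s}=0$), which the paper quotes verbatim and deploys with the same $k=mn+s$ bookkeeping in the proof of Theorem~\ref{t. Stable main}(2); note the paper does not reprove Theorem~\ref{eloisa} but cites it from \cite{Gr}. So you need not extract anything from the Ein--Lazarsfeld--Smith or Hochster--Huneke proofs yourself --- citing \cite{J} closes the one gap you identify, and Steps 1 and 2 (including the exact threshold $k\ge hm$ via the identity $h(k-1)-((hm-1)n+hs)=n-h\ge 0$) check out.
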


We  have learned that Harbourne, Kettinger, Zimmitti in \cite{HKZ} and DiPasquale, Drabkin in \cite{DD} proved independently that $\rho_a(I)< h$ if and only if $\rho(I)< h$.
As pointed out in \cite{DD} Remark 2.3, the next result is similar as Proposition 4.1.3 of Denkert's thesis \cite{D}, as  Lemma 4.12 in  \cite{DiPFMS}  and as Proposition 2.2 in  \cite{DD}.

For the ease of the reader, we adapt the proof in our case.

\begin{Lemma}\label{l. main lemma}
Let $I \subseteq k[x_0,\dots,x_n]$ be a homogeneous radical ideal with $big\ height (I) = h$, such that $\rho(I) > \rho_a(I) $. Suppose we have the equality $$\rho_a(I)  = \frac{hr_1-h}{r_1}$$ for some $r_1 > 0$. Then $\rho(I)$ can be computed by taking the maximum of finitely many $\frac{s}{r}$ with~$I^{(s)} \nsubseteq I^r$.
\end{Lemma}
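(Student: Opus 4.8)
### Proof plan

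The plan is to exploit the gap between $\rho(I)$ and $\rho_a(I)$ together with the known bound $\rho(I) \le h$ to confine all "bad pairs" $(s,r)$ — those with $I^{(s)} \nsubseteq I^r$ — to a bounded region of the $(s,r)$-plane, so that only finitely many of them matter. First I would recall the two basic constraints on a bad pair. Since $I^{(s)} \subseteq I^{(s')}$ whenever $s \ge s'$, and since $I^{(s)} \subseteq I^r$ whenever $s \ge hr$ by Theorem~\ref{einHH}, any pair with $I^{(s)} \nsubseteq I^r$ satisfies $s < hr$, i.e. $\frac{s}{r} < h$; and trivially $s \ge r$ because $I^r \subseteq I^{(s)}$ forces... more carefully, $I^{(s)}\nsubseteq I^r$ can only happen for $s \le hr-1$. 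These give the upper wall $s/r < h$.

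Next I would use the asymptotic resurgence to produce a \emph{lower} wall that is strictly below $h$, and crucially is \emph{attained} on a ray through the origin. By hypothesis $\rho_a(I) = \frac{hr_1 - h}{r_1}$ for some $r_1>0$. Recall (from the characterization $\rho_a(I) = \sup\{m/r : I^{(m)} \nsubseteq \overline{I^r}\}$, or directly from the definition of $\rho_a$ via $I^{(mt)} \nsubseteq I^{rt}$ for $t \gg 0$) that for any pair $(m,r)$ with $\frac{m}{r} > \rho_a(I)$ one has $I^{(mt)} \subseteq I^{rt}$ for all $t \gg 0$; in fact, using that $\rho_a$ is computed with integral closures and that $\overline{I^{rt}}\supseteq \overline{I^r}^{\,t}$, one gets the cleaner statement that $I^{(m)} \subseteq \overline{I^r}$ whenever $m/r > \rho_a(I)$. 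I would then combine this with a standard "if $\rho(I)>\rho_a(I)$ then $\overline{I^r}\subsetneq$-type" comparison between ordinary and integral-closure powers: there is a constant $c$ (depending only on $I$, e.g. coming from the Brian\c{c}on--Skoda-type bound $\overline{I^{r+n}} \subseteq I^r$, or more simply from $\overline{I^r} \subseteq I^{r-e}$ for a fixed $e$ and all $r$, which holds since $\bigoplus \overline{I^r}$ is a finitely generated module over the Rees algebra of $I$) so that $I^{(m)} \subseteq \overline{I^r} \subseteq I^{r-e}$ once $m/r > \rho_a(I)$ and $r$ is large. Rephrasing: if $I^{(s)} \nsubseteq I^r$ then either $r$ is bounded, or $\frac{s}{r} \le \rho_a(I) + \frac{\text{something}}{r}$; since $\rho_a(I) = h - \frac{h}{r_1}$ is a rational number with bounded denominator, and since on the segment of rationals with $s/r \le \rho_a(I)$ the map $r \mapsto$ (largest $s$ with $s/r \le \rho_a(I)$) stays within a bounded additive distance of $\rho_a(I)\,r$, I can pin the bad pairs into a strip $\rho_a(I)\,r - c' \le s < hr$ for $r$ large.

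Then I would close the argument as follows. Fix the strip $\{(s,r) : \rho_a(I) r - c' \le s \le hr - 1,\ r \ge r_0\}$ together with the finite box $\{(s,r): r < r_0,\ s \le hr-1\}$ that contains all possible bad pairs. On any vertical line $r = \text{const}$, the contribution of the strip to $\sup s/r$ is at most $(hr-1)/r = h - 1/r$, which \emph{decreases} to $h$ from below but — here is the point — is strictly less than $h$ for every fixed $r$, while from the asymptotic side, for $r$ large the bad pairs with $s \ge \rho_a(I)r - c'$ have ratio $s/r \le h - 1/r$, and I must rule out that $\sup$ over the infinite strip equals $h$. This is where I use $\rho(I) > \rho_a(I)$ \emph{quantitatively}: if infinitely many bad pairs $(s_k, r_k)$ had $s_k/r_k \to h$, then a fortiori $s_k/r_k > \rho_a(I)$ for $k$ large, so $I^{(s_k)} \subseteq I^{r_k - e}$ — but then $I^{(s_k)}\nsubseteq I^{r_k}$ only records the bounded gap between $I^{r_k-e}$ and $I^{r_k}$, which by a direct argument (rescaling: $I^{(s_k)} \nsubseteq I^{r_k}$ with $s_k/r_k \to h$ would give, after passing to $t$-th powers, a violation of $I^{(m t)}\subseteq I^{rt}$ for a ratio $m/r$ with $\rho_a(I) < m/r < h$, i.e. $\frac{m}{r} > \rho_a(I)$, contradicting the definition of $\rho_a$). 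Hence $\sup\{s/r : I^{(s)}\nsubseteq I^r\}$ is attained, and is attained at one of the finitely many lattice points inside the bounded region described above; taking the maximum of $s/r$ over that finite set gives $\rho(I)$.

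The main obstacle I expect is the second paragraph: making precise the passage from the \emph{asymptotic} non-containment data (which a priori only controls $I^{(mt)}$ vs $I^{rt}$ for $t\gg 0$, or equivalently $I^{(m)}$ vs $\overline{I^r}$) to a \emph{uniform} integer-level statement of the form "$s/r > \rho_a(I) \Rightarrow I^{(s)} \subseteq I^{r}$ for $r$ large", since one has to absorb the gap between $\overline{I^r}$ and $I^r$ and control it uniformly in $r$. This is exactly the content of the cited results (Proposition~4.1.3 of \cite{D}, Lemma~4.12 of \cite{DiPFMS}, Proposition~2.2 of \cite{DD}), so in the write-up I would either invoke one of them directly or reproduce the short Rees-algebra/finite-generation argument that yields the uniform constant $e$ with $\overline{I^r} \subseteq I^{r-e}$; once that constant is in hand, the bounding of the bad region and the finiteness conclusion are bookkeeping.
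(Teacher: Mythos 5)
Your proposal is correct and follows essentially the same route as the paper: Brian\c{c}on--Skoda ($\overline{I^{r+n}} \subseteq I^r$) together with the integral-closure characterization of $\rho_a$ (the cited Lemma 4.12 of \cite{DiPFMS}) confines every bad pair to $\frac{s}{r} \le \left(1+\frac{n}{r}\right)\rho_a(I)$, and the hypothesis $\rho(I) > \rho_a(I)$ then forces the supremum to be attained at bounded $r$, hence over finitely many pairs. The only slip is presentational: the threshold to beat in your third paragraph is a single bad ratio $s_0/r_0 > \rho_a(I)$ (not $h$), and that conclusion already follows from your second-paragraph bound without the rescaling detour; also the strip should be $s \le \rho_a(I)\,r + c'$ rather than $s \ge \rho_a(I)\,r - c'$.
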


\begin{proof}

Using Briancon Skoda Theorem (\cite{SH}, Corollary 13.3.4), we have that $\overline{I^{r+n}} \subseteq I^r$, where $n + 1$ is the number of variables in the polynomial ring and $\bar{I}$ denotes the integral closure of the ideal $I$.
For $s, r \in \mathbb{N}$ such that $ I^{(s)} \nsubseteq I^r$ then $I^{(s)} \nsubseteq \overline{I^{r+n}}$. Using \cite{DiPFMS}, Lemma 4.12,  we get $\frac{s}{r+n} < h(1-1/r_1) = \rho_a(I)$, that is,
$$\frac{s}{r} < (1+n/r)h(1-1/r_1).$$

If $\rho(I) > \rho_a(I)$, applying \cite{DD} Proposition 2.2, then there exist $s_0,r_0$, such that 
$I^{(s_0)}\nsubseteq I^{r_0}$ and 
$$ \rho(I)\geq \frac{s_0}{r_0} \geq (1 + \frac{n}{r})h(1-\frac{1}{r_1}),$$

\noindent
solving for $r$ gives us the inequality $$r \geq \frac{n}{\frac{s_0/r_0}{h(1-1/r_1)}-1},$$
\noindent 
so whenever $r \geq \frac{n}{\frac{s_0/r_0}{h(1-1/r_1)}-1}$ and $s$ 
is such that $I^{(s)} \nsubseteq I^r$, we have $\frac{s}{r} < \frac{s_0}{r_0}$. 

Hence, it suffices to look at $$r \leq \frac{n}{\frac{s_0/r_0}{h(1-1/r_1)}-1}$$ and 
$s \leq (r+n)h(1-\frac{1}{r_1})$.
\end{proof}

\begin{Corollary}\label{expres}
If the resurgence can be computed by taking the maximum of finitely many ratios of the form $\frac{m}{r}$ for which $I^{(m)} \nsubseteq I^r$, then $\rho(I) < h$. 
\end{Corollary}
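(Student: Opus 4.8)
The plan is to combine the uniform bound $\rho(I)\le h$ from Theorem~\ref{einHH} with the sharper observation that \emph{every individual} non-containment $I^{(m)}\nsubseteq I^{r}$ already forces the \emph{strict} inequality $m/r<h$. Once this is in hand, the conclusion is immediate, since a maximum of finitely many real numbers each strictly below $h$ is itself strictly below $h$; the finiteness hypothesis is exactly what is needed to pass from ``all ratios $<h$'' to ``their supremum $<h$''.

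First I would record the per-pair strict bound. Let $m,r\in\NN$ with $I^{(m)}\nsubseteq I^{r}$. Suppose, for contradiction, that $m\ge hr$. Since for a radical ideal the symbolic powers form a descending chain $I^{(a)}\supseteq I^{(b)}$ whenever $a\le b$ (each associated prime $\mathfrak p$ satisfies $\mathfrak p^{b}\subseteq\mathfrak p^{a}$), we get $I^{(m)}\subseteq I^{(hr)}$. By Theorem~\ref{einHH}, $I^{(hr)}\subseteq I^{r}$, and therefore $I^{(m)}\subseteq I^{r}$, contradicting the choice of $(m,r)$. Hence $m<hr$, i.e. $m/r<h$, with strict inequality.

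Next I would invoke the hypothesis: by assumption there are finitely many pairs $(m_{1},r_{1}),\dots,(m_{N},r_{N})$, each with $I^{(m_{i})}\nsubseteq I^{r_{i}}$, such that $\rho(I)=\max_{1\le i\le N} m_{i}/r_{i}$. The previous paragraph gives $m_{i}/r_{i}<h$ for every $i$, hence $\rho(I)=\max_{i} m_{i}/r_{i}<h$, which is the claim.

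I do not expect any genuine obstacle here: the entire content is the strict form of the containment bound, which follows formally from Theorem~\ref{einHH} together with monotonicity of symbolic powers, while the remaining step is the elementary fact that a finite maximum of quantities below $h$ remains below $h$. (The finiteness assumption is essential precisely because, a priori, an infinite supremum of such ratios could still approach $h$.)
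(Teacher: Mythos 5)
Your proposal is correct and follows essentially the same route as the paper: both arguments reduce to the observation that the containment $I^{(hk)}\subseteq I^{k}$ of Theorem~\ref{einHH} rules out any non-containment pair with $m/r\ge h$, so a finite maximum of such ratios is strictly below $h$. Your version is slightly more carefully phrased (you handle $m/r>h$ and $m/r=h$ uniformly via monotonicity of symbolic powers, where the paper only explicitly excludes the case $a/b=h$ and leaves the bound $\rho(I)\le h$ implicit), but the underlying idea is identical.
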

\begin{proof} Suppose we have $\frac{a}{b} = h$, then $I^{(hb)} \nsubseteq I^b$ is a contradiction since $I^{(hk)}\subseteq I^k$ for all $k\in \mathbb N$ (from \cite{ELS, HoHu, MS}). Hence, $\rho(I) < \frac{a}{b} = h$.
\end{proof}
The next proposition shows that Conjecture 3.1 in \cite{BCH} holds for  the Complement of a Steiner Configuration of points.

\begin{Proposition}\label{prop}
Let $I \subset k[\mathbb{P}^n]$ be an ideal defining a Complement of a Steiner Configuration of points and let $\mathcal{M} = (x_0,\dots,x_n)$ be the homogeneous maximal ideal. Then  $I^{(nr)} \subseteq \mathcal{M}^{rn}I^r$ holds for all $ r \in \mathbb{N}.$
\end{Proposition}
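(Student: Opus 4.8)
The plan is to combine the explicit computation of $\alpha(I^{(m)})$ from Theorem \ref{p. alpha(J_T^m)} with the fact (recorded in Proposition 3.6 of \cite{BFGM}) that the symbolic powers of $I=I_{X_C}$ have the same homological invariants as those of a squarefree monomial ideal $J$ which is the Stanley--Reisner ideal of a matroid. Since Cooper--Embree--H\`a--Hoefel proved in \cite{CEH} that squarefree monomial ideals satisfy the Harbourne--Huneke containments, in particular the statement $J^{(nr)}\subseteq \mathcal M^{rn}J^r$ for monomial ideals of big height $h=n$, I would first argue that the desired containment for $I$ can be reduced to, or checked in parallel with, the corresponding monomial containment. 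However, as the excerpt itself warns, the \emph{regular} powers of $J$ are not correlated to those of $I$, so one cannot simply transport the statement through $J$. Hence the real work will be a direct degree/multiplicity argument for $I$.

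Concretely, I would proceed as follows. First, note that the big height of $I_{X_C}$ is $n$ (each point in $\mathbb P^n$ has ideal of height $n$, and $X_C$ is a nonempty finite set of points), so the conjecture of \cite{BCH} in this case reads $I^{(nr)}\subseteq \mathcal M^{rn}I^r$. Second, recall the general mechanism: for ideals of points, membership of a form $f\in I^{(m)}$ in $\mathcal M^{a}I^r$ can be controlled by comparing degrees, using that $I^{(nr)}\subseteq I^r$ already holds by Theorem \ref{einHH} (with $h=n$). The standard trick (as in \cite{CEH} and in Harbourne--Huneke) is: if $f\in I^{(nr)}$, write $f$ via the containment $I^{(nr)}\subseteq I^r$; one wants additionally $nr$ factors of $\mathcal M$. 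I would exploit the sharp value $\alpha(I^{(m)}) = \alpha(I^{(q)}) + pv$ for $m=pn+q$: this gives $\alpha(I^{(nr)}) = rv$ (taking $p=r$, $q=0$), while $\alpha(I^r) \le r\alpha(I) = r(v-n)$. The "excess degree" $rv - r(v-n) = rn$ is exactly the number of copies of $\mathcal M$ demanded. So the heart of the argument is to show that this numerical excess is realized structurally, i.e. that every element of $I^{(nr)}$ actually lies in $\mathcal M^{rn}I^r$ and not merely in $I^r$.

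To make that rigorous I would use the known structure of symbolic powers of $I_{X_C}$ as hyperplane sections of symbolic powers of the matroidal monomial ideal $J$: after a generic linear change, $I^{(nr)}$ is the ideal of the fat point scheme $nr\cdot X_C$, and $\alpha$ is computed degree-by-degree. The cleanest route is: (a) show $\bigl(\mathcal M^{rn}I^r\bigr)_d = 0$ for $d < \alpha(I^{(nr)}) = rv$; (b) show that in degree exactly $rv$ the two spaces $I^{(nr)}_{rv}$ and $\bigl(\mathcal M^{rn}I^r\bigr)_{rv}$ coincide, because a minimal-degree element of $I^{(nr)}$ must be (up to the hyperplane arrangement $\mathcal H$) a product of linear forms $\ell_i$ with high multiplicities forced by the matroid combinatorics, and such a product visibly factors through $\mathcal M^{rn}I^r$; (c) bootstrap to all degrees $d \ge rv$ by multiplying by $\mathcal M^{d-rv}$ and invoking that $I^{(nr)}$ is generated in degrees $\ge \alpha(I^{(nr)})$ together with the regularity bound $\operatorname{reg}(I^{(nr)})$, which is again computable from \cite{BFGM}.

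The main obstacle I anticipate is step (b)–(c): controlling not just the least degree but the full graded module structure of $I^{(nr)}$, because unlike the purely monomial case one must carry the hyperplane-arrangement dictionary $P_{\mathcal H,\sigma}\leftrightarrow \sigma$ through products and intersections, and the regular power $I^r$ on the right is not ACM. If a direct structural argument proves too delicate, the fallback is to prove the weaker but sufficient statement by a pure degree count: use that $I^{(nr)}$ is generated in a single degree range, that $\alpha(I^{(nr)}) - \alpha(I^r) \ge rn$ by Theorem \ref{p. alpha(J_T^m)}, and that for ideals of points any homogeneous element of $I^{(nr)}$ of degree $d$ can be written as an element of $\mathcal M^{\,d-\alpha(I^r)-1}I^r$ whenever $I^{(nr)}\subseteq I^r$ and the excess is large enough — an argument entirely analogous to the reduction used right before Theorem \ref{eloisa} in the excerpt. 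This converts the problem into the already-established containment $I^{(nr)}\subseteq I^r$ of Theorem \ref{einHH} plus the explicit $\alpha$-formula, which is where I expect the clean proof to live.
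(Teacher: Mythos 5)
Your numerical skeleton is the right one and matches the paper's: compute $\alpha(I^{(nr)})=rv$ from Theorem \ref{p. alpha(J_T^m)}, note $\alpha(I^r)=r(v-n)$, observe that the excess $rv-r(v-n)=rn$ is exactly the power of $\mathcal{M}$ required, and feed this through the containment $I^{(nr)}\subseteq I^r$ of Theorem \ref{einHH}. But there is a genuine gap at the point you yourself flag as ``the heart of the argument'': converting the degree excess into membership in $\mathcal{M}^{rn}I^r$. The fact that makes this conversion legitimate is that $I^r$ is generated in a \emph{single} degree, i.e. $\omega(I^r)=\alpha(I^r)=r(v-n)$, where $\omega$ denotes the largest degree of a minimal generator; this is Corollary 4.7 of \cite{BFGM} and is the one ingredient your proposal never identifies. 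Once you have it, the proof is three lines: a homogeneous $f\in I^{(nr)}$ has degree $d\ge rv$, it lies in $I^r$, and writing $f=\sum g_ih_i$ with the $g_i$ minimal generators of $I^r$, all of degree exactly $r(v-n)$, forces $\deg h_i=d-r(v-n)\ge rn$, so $f\in\mathcal{M}^{rn}I^r$. No analysis of the graded pieces of $I^{(nr)}$, of its generating degrees, or of its regularity is needed, so your steps (a)--(c) are both harder than necessary and not actually carried out: step (b), the claimed equality $I^{(nr)}_{rv}=(\mathcal{M}^{rn}I^r)_{rv}$ via ``matroid combinatorics'', is asserted rather than proved, and step (c) would require control of \emph{all} generating degrees of $I^{(nr)}$, not just $\alpha(I^{(nr)})$.

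Your fallback is also not correct as stated: the claim that any homogeneous element of degree $d$ in an ideal $J$ lies in $\mathcal{M}^{\,d-\alpha(J)-1}J$ is false in general --- take $J=(x^2,y^5)$ and the element $y^5$ --- because the exponent must be measured against $\omega(J)$, not $\alpha(J)$. So the statement you propose to reduce to is exactly the missing input $\omega(I^r)=\alpha(I^r)$ in disguise. The detour through the monomial ideal $J$ and \cite{CEH} in your opening paragraph is, as you concede yourself, a dead end, since the regular powers of $J$ carry no information about those of $I$.
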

\begin{proof}
From Theorem \ref{p. alpha(J_T^m)}, we have $\alpha(I^{(nr)}) = rv$. From \cite{BFGM}, Corollary 4.7, we have $\omega(I^r) = \alpha(I^r) = r(v-n)$, where $\omega(I)$ is the maximum of the generating degrees of the ideal $I$. Since $I^{(nr)}\subseteq I^r$ for all $r \geq 1$, we have $\alpha(I^{(nr)}) \geq r\omega(I) = r\alpha(I)$ and $\alpha(I^r) = r(v-n)$, so $\alpha(I^{(nr)}) - r\omega(I)$ = $rv - r(v-n) = rn$. Since every minimal generator of $I^{(nr)}$ is contained inside $I^r$ and the difference between the degree of any nonzero homogeneous polynomial in $I^{(nr)}$ and that of generators of $I^r$ is at least $rn$, we have that $I^{(nr)} \subseteq \mathcal{M}^{rn}I^r$, the conclusion follows.
\end{proof}

We prove the main result of this section:

\begin{Theorem}\label{t. Stable main}
    Let $I \subseteq k[x_0,\dots,x_n]$ be the ideal defining the Complement of a Steiner Configuration of points in $\mathbb{P}^n_k$. Then $I$ satisfies 
    
    \begin{enumerate}
        \item Stable Harbourne--Huneke  Conjecture;
        \item Stable Harbourne Conjecture.
    \end{enumerate}
    
\end{Theorem}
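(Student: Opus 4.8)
The plan is to derive both conjectures from the explicit computation of $\alpha(I_{X_C}^{(m)})$ in Theorem \ref{p. alpha(J_T^m)} together with the containment criteria recalled at the start of this section (the three conditions from \cite{Gr} and Theorem \ref{eloisa}). The big height of $I := I_{X_C}$ is $h = n$, since $X_C$ is a finite set of points in $\mathbb{P}^n$ (all associated primes of $I$ are ideals of points, of height $n$). So the Stable Harbourne Conjecture asks for $I^{(nr-n+1)} \subseteq I^r$ for $r \gg 0$, and the Stable Harbourne--Huneke Conjecture asks for $I^{(nr)} \subseteq \mathcal{M}^{r(n-1)}I^r$ and $I^{(nr-n+1)} \subseteq \mathcal{M}^{(r-1)(n-1)}I^r$ for $r \gg 0$.

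First I would prove (2) by showing there exists $m \geq 2$ with $I^{(h(m-1))} \subseteq I^m$ and then invoking Theorem \ref{eloisa}; this gives $I^{(n(k-1))} \subseteq I^k$ for all $k \gg 0$, and since $nk - n + 1 \geq n(k-1)$ we get $I^{(nk-n+1)} \subseteq I^{(n(k-1))} \subseteq I^k$, which is exactly SHC. To find such an $m$, I would use the resurgence bound: by Corollary \ref{coro2} the Waldschmidt constant is $\widehat{\alpha}(I) = v/n$, and one knows in general $\rho(I) \le h/\widehat{\alpha}(I) \cdot (\text{something})$; more directly, comparing $\alpha(I^{(s)})$ from Theorem \ref{p. alpha(J_T^m)} with $\alpha(I^r) = r(v-n)$ from \cite{BFGM}, Corollary 4.7, one can show $\rho(I) < h = n$, and then condition (3) from \cite{Gr} immediately yields SHC. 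Concretely, if $I^{(s)} \not\subseteq I^r$ then $\alpha(I^{(s)}) \le \omega(I^r) = r(v-n)$ (actually one needs the sharper statement that non-containment forces $\alpha(I^{(s)}) < $ the least degree not already forced into $I^r$, handled via the ACM structure of the symbolic powers of $J$ as in \cite{BFGM}); writing $s = pn + q$ and using $\alpha(I^{(s)}) = v - n + q + pv$ for $2 \le q < n$ one bounds $s/r$ away from $n$. I would make this quantitative to extract an explicit $m$, or alternatively apply Lemma \ref{l. main lemma} and Corollary \ref{expres} to conclude $\rho(I) < h$, which is the content of Corollary \ref{expected}.

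Next I would prove (1). For part (i), $I^{(nr)} \subseteq \mathcal{M}^{r(n-1)}I^r$: this follows a fortiori from Proposition \ref{prop}, which already establishes the stronger $I^{(nr)} \subseteq \mathcal{M}^{rn}I^r$ for all $r \ge 1$ (and $\mathcal{M}^{rn} \subseteq \mathcal{M}^{r(n-1)}$). For part (ii), $I^{(nr-n+1)} \subseteq \mathcal{M}^{(r-1)(n-1)}I^r$ for $r \gg 0$: I would combine the degree bound with part (2). Once $I^{(nr-n+1)} \subseteq I^r$ holds (for $r \gg 0$, by SHC), every minimal generator of $I^{(nr-n+1)}$ lies in $I^r$; comparing degrees, $\alpha(I^{(nr-n+1)}) - \omega(I^r) = \alpha(I^{(nr-n+1)}) - r(v-n)$, and using Theorem \ref{p. alpha(J_T^m)} with $nr - n + 1 = n(r-1) + 1$ one computes $\alpha(I^{(nr-n+1)}) = (r-1)v + \alpha(I^{(1)}) = (r-1)v + (v-n) = rv - n$, so the degree gap is $rv - n - r(v-n) = rn - n = (r-1)n \ge (r-1)(n-1)$. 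As in the proof of Proposition \ref{prop}, the fact that the minimal degree gap between any nonzero form in $I^{(nr-n+1)}$ and the generating degrees of $I^r$ is at least $(r-1)(n-1)$ forces $I^{(nr-n+1)} \subseteq \mathcal{M}^{(r-1)(n-1)}I^r$.

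The main obstacle I anticipate is the step asserting that non-containment $I^{(s)} \not\subseteq I^r$ controls $\alpha(I^{(s)})$ tightly enough — i.e., passing from the numerical data of $\alpha(I^{(s)})$ to an actual containment of ideals, not just a containment of initial degrees. In the proof of Proposition \ref{prop} this is handled by the remark that "every minimal generator of $I^{(nr)}$ is contained inside $I^r$," which uses $I^{(nr)} \subseteq I^r$ as an input (from Theorem \ref{einHH}, since $nr = hr$); the analogous input for the sub-critical exponents $nr - n + 1$ is precisely SHC, which is why the logical order must be: establish $\rho(I) < h$ (or find the explicit $m$) $\Rightarrow$ SHC $\Rightarrow$ the refined Harbourne--Huneke degree containment. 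The delicate point is ensuring the degree argument (a nonzero homogeneous form of degree $d$ in a sum $\mathcal{M}^a I^r$ versus $I^r$) is airtight, which is exactly the type of reasoning already used and accepted in Proposition \ref{prop}, so I would model the write-up on that.
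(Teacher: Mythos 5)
Your proposal is essentially correct and runs on the same numerical engine as the paper's proof (the formulas for $\alpha(I^{(m)})$ from Theorem \ref{p. alpha(J_T^m)}, $\reg(I)=v-n+1$ from Corollary \ref{coro}, and $\omega(I^r)=\alpha(I^r)=r(v-n)$, plus Proposition \ref{prop} for item (1) of the Harbourne--Huneke containment), but both sub-arguments differ from the paper's in a genuine way. For $I^{(nr-n+1)}\subseteq \mathcal{M}^{(r-1)(n-1)}I^r$ the paper first shows $I^{(n(r-1))}\subseteq \mathcal{M}^{rn-v}I^r$ and then raises the symbolic exponent by one via Euler's formula ($I^{(m+1)}\subseteq\mathcal{M}I^{(m)}$), which costs the extra condition $r\geq v-n$; your direct computation $\alpha(I^{(n(r-1)+1)})=rv-n$, giving a degree gap of $(r-1)n\geq(r-1)(n-1)$ over $\omega(I^r)$, is cleaner and avoids Euler's formula altogether. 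For the Stable Harbourne Conjecture the paper does not go through the resurgence: it gets $I^{(n(r-1))}\subseteq I^r$ for $r\gg0$ from the criterion ``$\alpha(I^{(m)})\geq r\reg(I)$ implies $I^{(m)}\subseteq I^r$'' for ideals of points, and then bootstraps with Johnson's containment $I^{(ns+a_1+\cdots+a_s)}\subseteq I^{(a_1+1)}\cdots I^{(a_s+1)}$ from \cite{J}; your route via Theorem \ref{eloisa} works equally well once a single $m\geq 2$ with $I^{(n(m-1))}\subseteq I^m$ is produced, and the same regularity criterion supplies one (any $m$ with $(m-1)v\geq m(v-n+1)$, i.e. $m\geq v/(n-1)$).

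Two points need repair. First, your intermediate claim that $I^{(s)}\not\subseteq I^r$ forces $\alpha(I^{(s)})\leq\omega(I^r)$ is not correct as stated; the valid contrapositive is $\alpha(I^{(s)})< r\reg(I)$, and substituting $\omega(I^r)=r(v-n)$ for $r\reg(I)=r(v-n+1)$ overstates the conclusion. You half-acknowledge this, but it is the one inequality your resurgence route hinges on, so it must be phrased through the regularity. Second, beware of circularity: in the paper, Lemma \ref{l. main lemma}, Corollary \ref{expres} and Corollary \ref{expected} sit downstream of Theorem \ref{t. Stable main} (they take $\rho_a(I)<n$ as input), so you cannot invoke them to prove the Stable Harbourne Conjecture unless you first establish $\rho_a(I)<n$ independently --- which a Waldschmidt/regularity comparison such as $\rho(I)\leq\reg(I)/\widehat{\alpha}(I)=n(v-n+1)/v<n$ does, but that step has to be made explicit rather than borrowed from the corollaries.
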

\begin{proof}
(1) Consider the Steiner Configuration of points $S(t,n,v)$ in $\mathbb{P}_k^n$ and $I:=I_C$ the ideal defining its Complement.

Using  Theorem \ref{p. alpha(J_T^m)} , (iii), it is  $\alpha(I^{(n(r-1))}) = (r-1)v$.  Using Corollary \ref{coro}, and choosing $r\gg0$, such that 
$$(r-1)v \geq r\cdot\reg(I) = r(v-n+1)$$ we get $I^{(n(r-1))} \subseteq I^r.$
Moreover, since $\alpha(I^{(n(r-1))}) - \alpha(I^r) = v(r-1) - r(v-n) = rn-v$, we get $I^{(n(r-1))} \subseteq \mathcal{M}^{rn-v}I^r.$
Using Euler's Formula, we get $$\begin{array}{rcl}
I^{(n(r-1)+1)} &\subseteq& \mathcal{M}I^{(n(r-1))}\subseteq \mathcal{M}^{rn-v+1}I^r= \mathcal{M}^{rn-v-n+n+1}I^r = \mathcal{M}^{rn-n-(v-n)+1}I^r\\
&\subseteq& \mathcal{M}^{rn-n-r+1}I^r = \mathcal{M}^{(r-1)(n-1)}I^r.
\end{array}$$

(2) We have the containment $I^{(n(r-1))} \subseteq I^r$ for $r\gg0$.\\Let $k = nm+t$. From \cite{J}, we have $I^{(ns+a_1+\cdots+a_s)} \subseteq I^{(a_1+1)}I^{(a_2+1)}\cdots I^{(a_s+1)},$ letting $s = n+t, a_1 = a_2 = \dots = a_n = nm-n-1, a_{n+1} = \dots = a_{n+t} = 0$.

Let $k = nm + t = (v-1)n + t$ for $t \geq 0$ and let $s = n+t$ and 
$a_1 = a_2 = \cdots a_n = nm-n-1 = n(v-1)-n-1, a_{n+1} = \dots = a_t = 0$. Therefore, 
$$I^{(n(n+t)+ n(nm-n-1))} = I^{(n^2+nt +n^2m-n^2-n)} = I^{(nk-n)} \subseteq (I^{((n(v-1)-n))})^nI^t = I^{n(v-1)}I^t = I^k.$$

Hence $I^{(nk-n)} \subseteq I^k$
for $k\gg 0$. 
\end{proof}

As a consequence, we can show that the ideal of a Complement of a Steiner Configuration of points has expected resurgence, that is, its resurgence is strictly less than its big height (see \cite{GrHuM}).
\begin{Corollary}\label{expected}
    Let $I \subseteq k[x_0,\dots,x_n]$ be the ideal defining the Complement of a Steiner Configuration of points in $\mathbb{P}^n_k$. Then, $\rho(I) < n.$ 
\end{Corollary}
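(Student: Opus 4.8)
The plan is to deduce Corollary~\ref{expected} directly from the main Theorem~\ref{t. Stable main} together with the characterization of expected resurgence that is already assembled in this section. First I would recall that the big height of the ideal $I$ defining the Complement of a Steiner Configuration of points in $\mathbb{P}^n_k$ is $n$: the scheme is a finite set of reduced points, so every associated prime is the ideal of a point, which has height $n$. Hence the claim $\rho(I)<n$ is exactly the statement that $I$ has expected resurgence.

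**Next** I would invoke part~(2) of Theorem~\ref{t. Stable main}, which gives the containment $I^{(nk-n)} \subseteq I^k$ for all $k\gg 0$; that is, $I^{(hk-h)}\subseteq I^k$ for $k\gg 0$ with $h=n$. This is precisely condition~(1) from the list of sufficient conditions for the Stable Harbourne Conjecture quoted from \cite{Gr} at the start of this section, but more usefully it is the hypothesis needed to control the resurgence. The cleanest route is to combine it with Lemma~\ref{l. main lemma} and Corollary~\ref{expres}: I would argue that the asymptotic resurgence $\rho_a(I)$ attains a value of the form $\frac{hr_1-h}{r_1}$ — which follows because the containments $I^{(nk-n)}\subseteq I^k$ for $k\gg0$ force $\rho_a(I)\le \sup_k \frac{nk-n}{k}$ in the relevant range, and in fact equality at some finite $r_1$ can be read off from the explicit Waldschmidt constant $\widehat\alpha(I)=v/n$ in Corollary~\ref{coro2} and the formula for $\alpha(I^{(m)})$ in Theorem~\ref{p. alpha(J_T^m)}. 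If $\rho(I)=\rho_a(I)$ we are done immediately, since $\rho_a(I)<h$ as soon as the asymptotic resurgence equals $\frac{hr_1-h}{r_1}<h$; if $\rho(I)>\rho_a(I)$, then Lemma~\ref{l. main lemma} says $\rho(I)$ is the maximum of finitely many ratios $\frac{s}{r}$ with $I^{(s)}\nsubseteq I^r$, and Corollary~\ref{expres} then yields $\rho(I)<h=n$.

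**Alternatively**, and perhaps more transparently, I would give the short self-contained argument: from Theorem~\ref{t. Stable main}(2) we have $I^{(nk-n)}\subseteq I^k$ for all $k\ge k_0$, and by the basic comparison $I^{(m)}\subseteq I^r$ whenever $I^{(m')}\subseteq I^r$ for some $m'\le m$ together with a standard interpolation (using $I^{(a+b)}\subseteq I^{(a)}I^{(b)}$), one upgrades this to a uniform statement showing $\sup\{m/r : I^{(m)}\nsubseteq I^r\}$ cannot reach $n$; indeed if $m/r$ could be made arbitrarily close to $n$ with $I^{(m)}\nsubseteq I^r$, picking $r$ large and $m$ just below $nr$ would contradict $I^{(nr-n)}\subseteq I^r$ once $m\le nr-n$, i.e.\ once $m/r\le n-n/r$. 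Letting $r\to\infty$ pins the supremum strictly below $n$ only after one also rules out the boundary behavior, which is exactly what Lemma~\ref{l. main lemma} and Corollary~\ref{expres} are designed to handle, so I would in the end route through them.

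**The main obstacle** I anticipate is making rigorous the passage from "the containment $I^{(nk-n)}\subseteq I^k$ holds for $k\gg 0$" to "the resurgence (not merely the asymptotic resurgence) is bounded away from $n$." The asymptotic statement is easy; the subtlety, and the reason Lemma~\ref{l. main lemma} exists, is that finitely many small-power non-containments could in principle push $\rho(I)$ up — so one must verify the hypothesis of Lemma~\ref{l. main lemma}, namely that $\rho_a(I)$ is realized exactly as $\frac{nr_1-n}{r_1}$ for some finite $r_1$. I would establish this using the explicit value $\rho_a(I)=\alpha(I)/\widehat\alpha(I)=(v-n)/(v/n)=n(v-n)/v$ type computation available from Theorems~\ref{p. alpha(J_T^m)} and Corollary~\ref{coro2}, checking it has the required rational form, and then quote Lemma~\ref{l. main lemma} and Corollary~\ref{expres} verbatim to conclude $\rho(I)<n$.
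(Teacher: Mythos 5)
Your argument is essentially the paper's own proof: deduce $\rho_a(I)<n$ from Theorem~\ref{t. Stable main}, note $\rho_a(I)\leq\rho(I)$, and split into the cases $\rho(I)=\rho_a(I)$ (immediate) and $\rho(I)>\rho_a(I)$ (handled by Lemma~\ref{l. main lemma} together with Corollary~\ref{expres}). Your extra step of verifying the hypothesis of Lemma~\ref{l. main lemma} is more careful than the paper, which invokes the lemma without comment; just note that $\rho_a(I)=\alpha(I)/\widehat{\alpha}(I)$ is in general only a lower bound, so to make that verification rigorous you would need the matching upper bound (e.g.\ via $\omega(I)=\alpha(I)=v-n$ from \cite{BFGM}, Corollary 4.7, as used in Proposition~\ref{prop}).
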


\begin{proof}
   From Theorem \ref{t. Stable main}, we have $\rho_a(I) < n$. Note that $\rho_a(I) \leq \rho(I)$. If $\rho_a(I) = \rho(I)$, then clearly $\rho(I) < n$. On the other hand, if $\rho_a(I) < \rho(I)$, then from Lemma \ref{l. main lemma} and Corollary \ref{expres}, we conclude that $\rho(I) < n.$
\end{proof}

We give an alternative proof of Chudnovky’s Conjecture:
\begin{Corollary}\label{Chudnovky}
 Let $I \subseteq k[x_0,\dots,x_n]$ be the ideal defining the Complement of a Steiner Configuration of points in $\mathbb{P}^n_k$. Then Chudnovky’s Conjecture holds for $I$. 
\end{Corollary}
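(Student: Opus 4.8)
The plan is to deduce Chudnovsky's inequality directly from the exact value of the Waldschmidt constant recorded in Corollary \ref{coro2}, using the elementary fact that $\widehat\alpha(I)$ is a lower bound for $\alpha(I^{(h)})/h$ for every $h$. First I would recall that $\alpha$ is subadditive on symbolic powers: if $f\in I^{(a)}$ and $g\in I^{(b)}$ realize $\alpha(I^{(a)})$ and $\alpha(I^{(b)})$, then $fg\in I^{(a)}I^{(b)}\subseteq I^{(a+b)}$ shows $\alpha(I^{(a+b)})\le \alpha(I^{(a)})+\alpha(I^{(b)})$ for all $a,b$. By Fekete's lemma this gives
$$\widehat\alpha(I)=\lim_{m\to\infty}\frac{\alpha(I^{(m)})}{m}=\inf_{m\ge 1}\frac{\alpha(I^{(m)})}{m},$$
so in particular $\dfrac{\alpha(I^{(h)})}{h}\ge \widehat\alpha(I)$ for every $h\ge 1$.

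Next, for $I=I_{X_C}$ the ideal of the Complement of a Steiner configuration of type $S(t,n,v)$, Corollary \ref{coro2} gives $\widehat\alpha(I)=v/n$, while Theorem \ref{p. alpha(J_T^m)}(i) gives $\alpha(I)=v-n$, hence $\dfrac{\alpha(I)+n-1}{n}=\dfrac{v-1}{n}$. Combining these,
$$\frac{\alpha(I^{(h)})}{h}\ \ge\ \widehat\alpha(I)\ =\ \frac{v}{n}\ >\ \frac{v-1}{n}\ =\ \frac{\alpha(I)+n-1}{n}$$
for all $h>1$ (indeed for all $h\ge 1$), which is precisely Chudnovsky's Conjecture for $I$.

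There is essentially no obstacle here: the whole argument reduces to the already-established exact Waldschmidt constant, and the only inequality used, $v/n>(v-1)/n$, is trivial. If a self-contained argument avoiding the limit is preferred, the same conclusion follows by a short case analysis on the residue of $h$ modulo $n$ using the explicit formula in Theorem \ref{p. alpha(J_T^m)}(iii): writing $h=pn+q$ with $0\le q<n$, one checks $n\,\alpha(I^{(h)})\ge (v-1)h$ in each case, the hypothesis $v\ge n$ making every case immediate. I would present the Waldschmidt-constant argument as the main proof and merely note the alternative.
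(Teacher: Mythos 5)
Your proposal is correct and follows essentially the same route as the paper: both reduce Chudnovsky's inequality to $\widehat\alpha(I)\ge(\alpha(I)+n-1)/n$ via the standard fact $\alpha(I^{(h)})/h\ge\widehat\alpha(I)$, and then verify $v/n\ge(v-1)/n$ using Theorem \ref{p. alpha(J_T^m)}(i) and Corollary \ref{coro2}. The only difference is that you make explicit the subadditivity/Fekete argument that the paper leaves implicit.
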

\begin{proof}
 From Theorem \ref{p. alpha(J_T^m)}, item i) $\alpha(I)=v-n$ and from  Theorem \ref{coro2} it is $ \widehat{\alpha}(I)=\frac{v}{n}$. Then $$\widehat{\alpha}(I)\geq \frac{\alpha(I) + n -1}{n} \Leftrightarrow \frac{v}{n}\geq \frac{v-1}{n}.$$
\end{proof}

\begin{Corollary}\label{demailly}
 Let $I \subseteq k[x_0,\dots,x_n]$ be the ideal defining the Complement of a Steiner Configuration of points in $\mathbb{P}^n_k$. Then Demailly’s Conjecture holds for $I$. 
\end{Corollary}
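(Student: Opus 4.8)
The plan is to reduce Demailly's inequality $\frac{\alpha(I^{(h)})}{h}\geq \frac{\alpha(I^{(m)})+n-1}{m+n-1}$ to explicit arithmetic using the exact values of $\alpha(I^{(q)})$ recorded in Theorem \ref{p. alpha(J_T^m)}. First I would observe that since $\widehat\alpha(I)=\lim_{h\to\infty}\alpha(I^{(h)})/h$ and the sequence $\alpha(I^{(h)})/h$ satisfies $\alpha(I^{(h)})/h\geq \widehat\alpha(I)$ for all $h$ (a general subadditivity fact, $\alpha(I^{(a+b)})\le \alpha(I^{(a)})+\alpha(I^{(b)})$, so $\widehat\alpha$ is an infimum of $\alpha(I^{(h)})/h$ along suitable subsequences and in fact $\alpha(I^{(h)})\ge h\,\widehat\alpha(I)$), it suffices to show $\widehat\alpha(I)\geq \frac{\alpha(I^{(m)})+n-1}{m+n-1}$ for every $m\ge 1$. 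By Corollary \ref{coro2} we have $\widehat\alpha(I)=v/n$, so the claim becomes
\[
\frac{v}{n}\,(m+n-1)\;\geq\;\alpha(I^{(m)})+n-1\qquad\text{for all }m\geq 1.
\]

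Next I would verify this inequality by splitting $m=pn+q$ with $0\le q<n$ and inserting the formula from Theorem \ref{p. alpha(J_T^m)}(iii), namely $\alpha(I^{(m)})=\alpha(I^{(q)})+pv$, where $\alpha(I^{(q)})=v-n+q$ for $1\le q<n$ (and $\alpha(I^{(0)})=0$, so that for $q=0$ one has $\alpha(I^{(m)})=pv$). Substituting, the right-hand side is $pv + (v-n+q) + (n-1) = pv + v + q - 1$ when $1\le q<n$, while the left-hand side is $\frac{v}{n}(pn+q+n-1) = pv + v + \frac{v}{n}(q-1)$. So the inequality reduces to $\frac{v}{n}(q-1)\ge q-1$, i.e. $(q-1)(v-n)\ge 0$, which holds since $v\ge n$; for $q=0$ one checks $\frac{v}{n}(pn+n-1)=pv+v-\frac{v}{n}\ge pv+n-1$, equivalent to $v-\frac{v}{n}\ge n-1$, i.e. $(v-n)(1-\tfrac1n)\ge 0$, again true. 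Thus the inequality holds in all cases, establishing Demailly's Conjecture.

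The only genuinely nontrivial ingredient is the inequality $\alpha(I^{(h)})/h\ge \widehat\alpha(I)$, which I would justify from the superadditivity of the function $h\mapsto \alpha(I^{(h)})$ under the product of symbolic powers ($I^{(a)}I^{(b)}\subseteq I^{(a+b)}$ gives $\alpha(I^{(a+b)})\le\alpha(I^{(a)})+\alpha(I^{(b)})$, whence $\widehat\alpha(I)=\inf_h \alpha(I^{(h)})/h$ by Fekete's lemma); everything after that is the elementary case analysis above, using only $v\ge n$. I expect no real obstacle: the main point is simply that the exact value of $\widehat\alpha(I)=v/n$ from \cite{BFGM} is what makes Demailly's bound automatic, exactly as Chudnovsky's bound was automatic in Corollary \ref{Chudnovky}.
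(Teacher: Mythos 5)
Your proposal is correct and follows essentially the same route as the paper: both reduce Demailly's inequality to showing $\frac{\alpha(I^{(m)})+n-1}{m+n-1}\le \widehat\alpha(I)=\frac{v}{n}$ (via the standard fact $\alpha(I^{(h)})/h\ge\widehat\alpha(I)$, which the paper leaves implicit but you rightly justify by subadditivity and Fekete's lemma) and then verify this by the same case analysis on $m=pn+q$ using Theorem \ref{p. alpha(J_T^m)} and Corollary \ref{coro2}. The only slip is that you use $\alpha(I^{(q)})=v-n+q$ for $q=1$, whereas the theorem gives $\alpha(I^{(1)})=\alpha(I)=v-n$; since your value overestimates $\alpha(I^{(m)})$, the inequality you prove is only stronger, so the argument is unaffected.
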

\begin{proof}
From Theorem \ref{p. alpha(J_T^m)}, for $m = pn + q$ and for $2 \leq q < n$ it is $\alpha(I^{(m)}) = pv + \alpha(I^{(q)}) = pv + v - n + q$.
From Corollary \ref{coro2}, we have that $\widehat{\alpha}(I)=\frac{v}{n}$. Hence, whenever
\begin{enumerate}
    \item $m = pn + q$, with $2\leq  q < n$, 
    we have $$\frac{\alpha(I^{(m)}) + n-1}{m+n-1} = \frac{pv + v-n+q +n-1}{pn+q+n-1} = \frac{(p+1)v+q-1}{(p+1)n+q-1} \leq \frac{v}{n} = \widehat{\alpha}(I)$$
\item $q=1$ and $m = np +1$, we have $$\frac{\alpha(I^{(m)}) + n-1}{m+n-1} = \frac{pv + v-n +n-1}{pn+1+n-1} = \frac{(p+1)v-1}{(p+1)n} < \frac{v}{n} =\widehat{\alpha}(I)$$

\item $q=0$ and $m = np $, we have $$\frac{\alpha(I^{(m)}) + n-1}{m+n-1} = \frac{pv  +n-1}{pn+n-1} < \frac{v}{n} =\widehat{\alpha}(I).$$
\end{enumerate}
\end{proof}

\begin{Remark}
Chudnovsky's Conjecture can be showed from Proposition \ref{prop}. We have that
$I^{(nr)}\subseteq \mathcal{M}^{rn}I^r$.
This gives us the inequality $\alpha(I^{(nr)})\geq rn + r\alpha(I)\geq rn + r\alpha(I) - r$. Dividing both sides by
$nr$ and letting $r \rightarrow \infty$ gives
$$\widehat{\alpha}(I)\geq \frac{\alpha(I) + n -1}{n}.$$
\end{Remark}

\section{Containment and colouring}\label{s.coloring}

In this section we focus on the relation between the colourability of a hypergraph ${H}$ and the failure of the containment problem for the cover ideal associated to $H$. Then, we apply these results in the case that $H$ is a Steiner System. There exists an extensive literature on the subject of colourings both from Design Theory and Algebraic Geometry/Commutative Algebra point of view. Among all, we make use of  \cite{BTV, FHVT2, HVT, GMV} as some of referring texts. 

Most of the existing papers are devoted to the case of weak
colourings (or vertex colourings), i.e. colourings where the colours are assigned to the elements in such a way that no hyperedge is monochromatic (i.e. no hyperedge has all its elements assigned the same colour). The reader can see \cite{BTV} or  Chapter 3 in \cite{GMV} for other types of colouring a hypergraph, such as {\em  strong vertex colouring, vertex equicolouring, good colouring of $H$}.

In this paper we use the case of  weak colouring to get results on Containment problem.

We first recall some known definitions and results from \cite{BTV} or \cite{GMV}, Chapter 2.

    A hypergraph is a pair $H=(V,E)$, where $E=\{x_1,\dots,x_n\}$ is a finite nonempty set containing $n$ elements called vertices and $E=\{e_i\}_{i\in I}$ ($I$ set of indices) is a family of subsets of $X$, called edges, or otherwise hyperedges, such that for all $e\in E, e\neq \emptyset$ and $\cup_{e\in E}^{}\; e=X$.

 A colouring of a hypergraph $H= (V,E)$  is a surjective mapping $c: V \rightarrow C$ where $C$ is the
set of colours. When $|C|=m$, then a proper $m$-colouring of a hypergraph $H= (V,E)$  is a mapping $c:V\rightarrow \{1,2,\dots, m\}$ for which every edge $e\in E$ has at least two vertices of different colours.

As for graphs, proper colourings generate partitions of the vertex set into a number of stable (independent) non-empty subsets called colour classes, with as many classes as the number of colours actually used. 

Thus, we use an equivalent definition from \cite{FHVT2, HVT}, used in Combinatorial Algebraic Geometry/Commutative Algebra research, i.e, 
\begin{Definition}
Let $H= (V,E)$ be a hypergraph. An \textit{$m$-colouring} of $H$ is any partition of $V=U_1\cup \cdots \cup U_m$ into $m$ disjoint sets such that for every  $e\in E$ we have $e\not\subseteq U_j$ for all $j=1,\dots,m$.
The $U_j$'s are called the \textit{colour classes}.

 The  chromatic number of $H$, denoted by $\chi(H)$, is the minimum $m$ such that $H$ has an $m$-colouring.
 \end{Definition}

\begin{Definition}
	A hypergraph $H=(V,E)$ is  \textit{$m$-colourable} if there exists a proper $m$-colouring, i.e, if $\chi(H)\leq m$.
\end{Definition}

\begin{Definition}
We say $H$ is \textit{$m$-chromatic} if it is $m$-colourable but not $(m-1)$-colourable.
\end{Definition}

 When $\chi(H)\leq 2$, the hypergraph $H$ is called {\em bicolourable}. (In parts of the literature the term ‘bipartite’ is also used.) 

\begin{Definition}\label{d. d-cover}
	Let $H:=(V,E)$ be a hypergraph. For an integer $c,$ we say that $H$ is \textit{$c$-coverable} if there exists a partition $U_1, U_2, \dots, U_c$ of $V$ such that $e\cap U_i\neq \emptyset$ for each $i=1,\ldots,c$ and for each $e\in E.$ 
\end{Definition}

\begin{Remark}\label{r. cov col}
	Note that, as an immediate consequence of the above definitions,  if $H$ is $c$-coverable, $c>1$, then $H$ is $c$-colourable. 
\end{Remark}

\begin{Example} Set $V:=\{x_1,x_2,x_3,x_4,x_5,x_6,x_7\}$. Let $H$ be the set of blocks of a $STS(7)$
	
	\begin{equation}\label{fano}
	  H:=\{\{x_1, x_2, x_3\},\{x_1, x_4, x_5\},\{ x_1, x_6, x_7\},\{ x_2, x_4, x_6\},\{x_2, x_5, x_7\},\{x_3, x_4, x_7\},\{x_3, x_5, x_6\}\}.
	\end{equation} 
 Take, for instance, the partition $\{x_1,x_2,x_5\},\{x_3,x_4,x_6\},\{x_7\}$ (see Figure \ref{fig. steiner7}). 	$H$ is 3-colourable but it is not 3-coverable. 
	
Notice also that no colouring of $H$ with two colours exists. Then $\chi(H)=3$.	
	\begin{figure}[ht]
	\centering
	\includegraphics[scale=0.2]{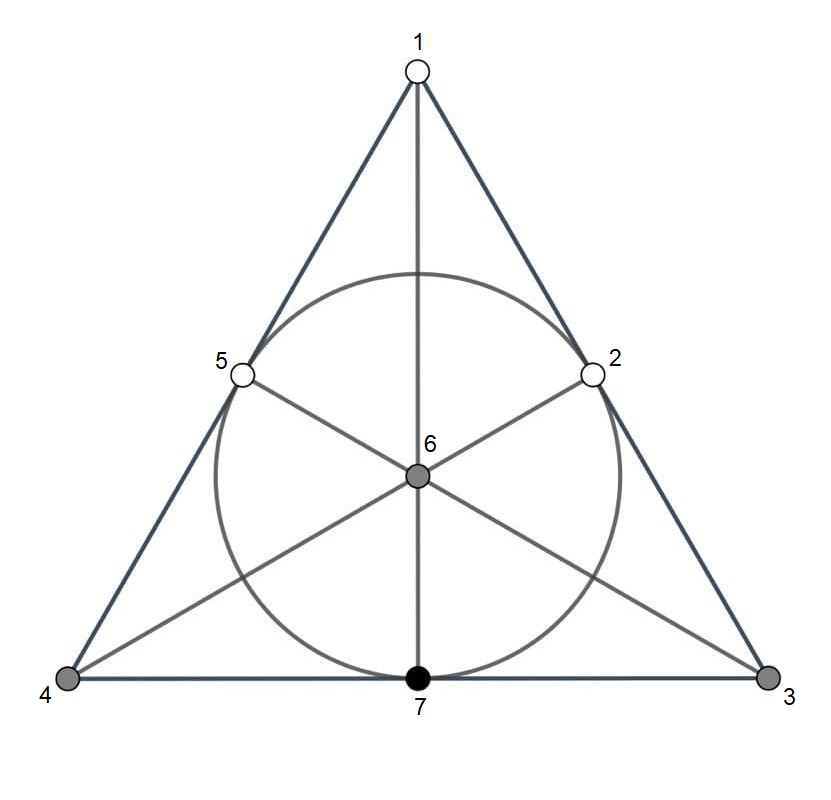}
 \caption{ The three colour classes of a $STS(7)$. }\label{fig. steiner7}

\end{figure}

\end{Example}

\begin{Remark}
We refer the reader to Section 3.5 in \cite{GMV} to see examples of different types of colourings that give different chromatic numbers for the same $H$. In particular, in  Example 8 of \cite{GMV}, the {\em strong} vertex colouring of $H$ as in (\ref{fano}) gives $\chi(H)=7$ (recall that a mapping $c$ is a strong colouring of vertices of $H$ if for all $e\in E$ it is $|c(e)|=|e|$).
\end{Remark}

For a non empty hypergraph $H$, i.e.   $H\subset 2^V$,  we define the ideal 
$$J_H:= \bigcap_{\sigma\in H} \mathfrak p_{\sigma} \subseteq k[V]$$   
called the {\em  cover ideal} of $H,$ 
where for a subset of $V$, $\sigma:=\{{i_1}, {i_2}, \ldots, {i_n}\}\subseteq V,$ 
the ideal $$\mathfrak p_{\sigma}:=(x_{i_1}, x_{i_2}, \ldots, x_{i_n})\subseteq k[V]$$
denotes the prime ideal generated by the variables indexed by $\sigma$.

For a hypergraph  $H=(V,B)$, we denote by $\tau(H):=\min_{b\in B}\{ |b| \}.$

We study some properties of the cover ideals of $B$. The following results show a relation between the coverability of a hypergraph $H$ and that the Containment problem can fail.

\begin{Theorem}\label{t.containement fails} Let $H=(V,B)$ be a hypergraph. If $H$ is not $d$-coverable then~$J_H^{(\tau(H))}\not \subseteq J_H^{d}.$
\end{Theorem}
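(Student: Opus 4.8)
The plan is to prove the contrapositive in a slightly repackaged form: I will show directly that if a nonzero homogeneous form $F$ lies in $J_H^{(\tau(H))}$ but not in $J_H^d$, then one can extract from $F$ an explicit $d$-covering partition of $V$, so non-$d$-coverability forces $J_H^{(\tau(H))}\subseteq J_H^d$ to fail only in the stated direction — i.e. I will actually show the containment $J_H^{(\tau(H))}\subseteq J_H^d$ \emph{holds} whenever $H$ \emph{is} $d$-coverable, and the theorem's statement is the logical content of that implication read backwards. Let me set $\tau:=\tau(H)$. The key objects are: the squarefree monomial generators of $J_H^{(\tau)}$, which by the description $J_H^{(\tau)}=\bigcap_{\sigma\in B}\mathfrak p_\sigma^{\tau}$ are understood combinatorially via the condition that a monomial $x^{\mathbf a}=\prod x_i^{a_i}$ lies in $J_H^{(\tau)}$ iff $\sum_{i\in\sigma}a_i\ge \tau$ for every edge $\sigma\in B$; and the monomials of $J_H^d=\big(\bigcap_{\sigma}\mathfrak p_\sigma\big)^d$, which lie in $J_H^d$ iff $x^{\mathbf a}$ can be written as a product of $d$ monomials each of which is a vertex cover of $H$ (a set meeting every edge), i.e. iff $\mathbf a$ dominates a sum of $d$ indicator vectors of vertex covers.

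The first step is to exhibit a distinguished element of $J_H^{(\tau)}$. Since $\tau=\min_{\sigma}|\sigma|$, the squarefree monomial $m_0:=\prod_{i\in V}x_i$ — more precisely, for each edge $\sigma$ one has $\sum_{i\in\sigma}1=|\sigma|\ge\tau$ — shows $m_0\in J_H^{(\tau)}$; but a cleaner and tighter choice is to take, for a fixed edge $\sigma_0$ of minimal size $\tau$, the monomial obtained by raising all variables to a suitable power, or better: note that $x^{\mathbf 1}=\prod_{i\in V}x_i\in J_H^{(\tau)}$ always, since each edge has at least $\tau$ vertices. The second step is the contrapositive itself: suppose $J_H^{(\tau)}\subseteq J_H^d$. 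Then in particular $\prod_{i\in V}x_i\in J_H^d$, so there exist monomials $m_1,\dots,m_d$, each a vertex cover of $H$ (each meeting every edge), with $m_1\cdots m_d \mid \prod_{i\in V}x_i$. Because the product $m_1\cdots m_d$ divides a squarefree monomial, the supports $U_1:=\mathrm{supp}(m_1),\dots,U_d:=\mathrm{supp}(m_d)$ are pairwise disjoint. Each $U_j$ meets every edge (since $m_j$ is a cover), so extending this to a partition of $V$ by throwing leftover vertices into, say, $U_1$ (or any $U_j$), we obtain a partition $V=U_1'\cup\dots\cup U_d'$ with $e\cap U_j'\neq\emptyset$ for all $j$ and all $e\in B$. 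That is exactly a $d$-covering of $H$ in the sense of Definition \ref{d. d-cover}, contradicting the hypothesis that $H$ is not $d$-coverable. Hence $J_H^{(\tau)}\not\subseteq J_H^d$.

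The step I expect to require the most care is verifying the combinatorial characterisation of membership in $J_H^d$: that a squarefree monomial lies in $\big(\bigcap_\sigma\mathfrak p_\sigma\big)^d$ precisely when it factors as a product of $d$ vertex-cover monomials. One inclusion is immediate (a product of covers lies in each $\mathfrak p_\sigma^d\subseteq\dots$, hence in the power of the intersection, after checking $\big(\bigcap\mathfrak p_\sigma\big)^d\supseteq$ the ideal generated by products of $d$ covers — in fact the cover ideal $\bigcap_\sigma\mathfrak p_\sigma$ is exactly the ideal generated by the squarefree vertex-cover monomials, a standard fact about cover ideals of hypergraphs, so its $d$-th power is generated by products of $d$ such generators). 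The reverse — that every monomial of $J_H^d$ is \emph{divisible} by such a product — needs the observation that $J_H^d$ is a monomial ideal whose minimal generators are the products of $d$ minimal vertex covers, together with the remark that divisibility of monomials is what "lies in a monomial ideal" means; and then one must be careful that the factorisation of the \emph{squarefree} witness $\prod_{i\in V}x_i$ forces disjoint supports, which is where squarefreeness of the chosen element of $J_H^{(\tau)}$ is essential (a non-squarefree witness could have a cover appearing with a repeated variable and the supports need not be disjoint). Once these points are nailed down, the extension of the disjoint family $\{U_j\}$ to an honest partition and the invocation of Definition \ref{d. d-cover} are routine.
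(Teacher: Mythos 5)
Your proposal is correct and follows essentially the same route as the paper: both use the product of all variables $w=\prod_{i\in V}x_i$ as the witness, place it in $J_H^{(\tau(H))}$ because every edge has at least $\tau(H)$ vertices, and then turn a hypothetical factorization of $w$ into $d$ elements of $J_H$ into a partition of $V$ witnessing $d$-coverability. The extra care you take with the monomial characterisation of membership in $J_H^d$ and with extending the disjoint supports to a genuine partition is sound and matches what the paper does implicitly.
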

\begin{proof}
	We put $\tau:=\tau(H)$ and $w:=x_1\cdot x_2\cdots x_n$. In order to prove the statement it is enough to show that $w\in J_H^{(\tau)}$ but $w\notin J_H^{d}.$
	For each $b\in B$ the ideal $\mathfrak p_b$ has height $|b|\le \tau$. Therefore $x_b\in (\mathfrak p_b)^{\tau}$. Thus $w \in \mathfrak p_b^{\tau}$ for each $b\in B$. This implies $w\in J_H^{(\tau)}.$
	By contradiction, assume $w\in J_H^{d}.$ Thus, there exist $w_1, \ldots, w_d\in J_H$ such that $w=w_1\cdots w_d.$ 
	We $U_j:=\{x_u\in V\ |\ x_u\ \text{divides}\ w_j \}$, then $U_1,\ldots, U_{d}$ is a partition  of $V$.
	Thus,  for each $b\in B$ we have $w_i\in \mathfrak p_b$, therefore $U_i\cap e \neq 0$ with $i=1,\ldots, d.$
	This contradicts that $H$ is not $d$-coverable.
\end{proof}

Recall that an $m$-colouring of $(V,B)$ is called an $m$-bicolouring if the vertices of each $b\in B$ are coloured with exactly two colours. A Steiner Triple Systems $(V,B)$ admitting an $m$-bicolouring is $m$-bicolourable.
Thus, in a bicolouring of a Steiner Triple System $(V,B)$, every triple has two elements in one colour class and one in another class, so there are no monochromatic triples nor polychromatic triples (i.e. triples receiving three colours).  For instance, for a deep investigation of colouring properties of Steiner Triple Systems the reader can see \cite{CDR}.

As a consequence, we get a failure of the containment for the cover ideals associated to Steiner Triple Systems $(V,B)$ of type $S(t,n,v).$

\begin{Proposition}
    If $v>3$ and $S(2,3,v)=(V,B)$ is a Steiner Triple System, then 
	$J_B^{(3)}\not\subseteq J_B^2$.
\end{Proposition}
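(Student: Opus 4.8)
The plan is to deduce the statement directly from Theorem \ref{t.containement fails} by checking that the hypergraph $H=(V,B)$ is not $2$-coverable. Since every block of a Steiner triple system $S(2,3,v)$ has exactly three elements, $\tau(H)=\min_{b\in B}\{|b|\}=3$, so Theorem \ref{t.containement fails} applied with $d=2$ reduces the claim $J_B^{(3)}\not\subseteq J_B^2$ to the assertion that $(V,B)$ admits no partition $V=U_1\cup U_2$ into two nonempty classes with $b\cap U_1\neq\emptyset$ and $b\cap U_2\neq\emptyset$ for every $b\in B$. Because the blocks have size $3$, such a partition is exactly a proper $2$-colouring (a bicolouring) of $H$: a triple meets both classes if and only if it is not monochromatic. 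Hence it suffices to show that no Steiner triple system with $v>3$ is bicolourable.

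For this I would run the standard double-counting argument. Suppose for contradiction that $V=U_1\cup U_2$ is a bicolouring and set $a_i:=|U_i|$, so $a_1+a_2=v$. Every pair of vertices inside $U_1$ lies in a unique block, and that block cannot be contained in $U_1$, so its third vertex lies in $U_2$; conversely a block meeting $U_1$ in exactly two vertices contributes exactly one such pair, a block meeting $U_1$ in one vertex contributes none, and there are no monochromatic blocks. Therefore the number of blocks meeting $U_1$ in two vertices is $\binom{a_1}{2}$, the number meeting $U_2$ in two vertices is $\binom{a_2}{2}$, and every block is of one of these two types, so $|B|=\binom{a_1}{2}+\binom{a_2}{2}$. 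On the other hand $|B|=\binom{v}{2}/\binom{3}{2}=\tfrac{v(v-1)}{6}$. Equating and simplifying gives $3(a_1^2+a_2^2)=v(v+2)$.

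The contradiction then comes from a convexity bound: $a_1^2+a_2^2\geq (a_1+a_2)^2/2=v^2/2$, so $3(a_1^2+a_2^2)\geq \tfrac{3}{2}v^2$, which forces $\tfrac{3}{2}v^2\leq v(v+2)$, i.e.\ $v\leq 4$. Since a Steiner triple system exists only for $v\equiv 1,3\pmod 6$, the hypothesis $v>3$ gives $v\geq 7$, a contradiction. Thus $H$ is not bicolourable, hence not $2$-coverable, and Theorem \ref{t.containement fails} yields $J_B^{(3)}\not\subseteq J_B^2$.

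I do not expect a serious obstacle here; the only points needing care are matching the two exponents ($\tau(H)=3$ on the symbolic side and $d=2$ on the ordinary side) and observing that for $3$-uniform hypergraphs ``$2$-coverable'' and ``bicolourable'' genuinely coincide, which is precisely why the hypothesis $v>3$ (equivalently $v\geq 7$) is what is required. Alternatively, the non-bicolourability of $STS(v)$ for $v>3$ could simply be quoted from \cite{ColbournRosa} in place of the counting argument.
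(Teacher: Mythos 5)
Your proof is correct and follows the same overall route as the paper: both reduce the non-containment to the assertion that $(V,B)$ is not $2$-coverable via Theorem \ref{t.containement fails} (with $\tau(H)=3$, $d=2$), and both identify $2$-coverability of a $3$-uniform hypergraph with the existence of a bicolouring. The one genuine difference is the final step: the paper simply cites Rosa \cite{R} for the classical fact that a Steiner triple system with $v>3$ admits no bicolouring, whereas you prove it from scratch by double counting, obtaining $|B|=\binom{a_1}{2}+\binom{a_2}{2}$, hence $3(a_1^2+a_2^2)=v(v+2)$, and then ruling this out by convexity for $v\geq 7$. I checked the computation and it is right (and $v=3$ is correctly excluded, since the single-block system is $2$-coverable). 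Your version buys self-containedness at the cost of a few lines; the paper's buys brevity by outsourcing to the design-theory literature. Either is acceptable.
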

\begin{proof}
	It is enough to show that $B$ in not 2-coverable. Assume by contradiction $V=U_1\cup U_2.$ By definition, for each $\{i,j,k\}\in B$ we have $\{i,j,k\}\cap U_1\neq \emptyset$ and $\{i,j,k\}\cap U_2\neq \emptyset$.  
	This implies that $S(2,3,v)$ is 2-bicolourable contradicting a well known fact about Steiner Triple Systems, see~\cite{R}.
\end{proof}

We end the paper showing the failure of the containment for the cover ideals associated to Steiner Systems.

\begin{Proposition}\label{p. t-coverability}
	Let $\mathcal S=(V,B)$ be a Steiner System with parameters $S(t,t+a,v)$  where $1\le a\leq  t-2$ and  $v>(a+1)t$.  Then,  $J_B^{(t+a)}\not \subseteq J_B^{t}.$
\end{Proposition}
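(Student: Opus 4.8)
The plan is to apply Theorem~\ref{t.containement fails} with $d = t$, so it suffices to prove that a Steiner system $\mathcal{S} = (V,B)$ of type $S(t,t+a,v)$ with $1 \le a \le t-2$ and $v > (a+1)t$ is \emph{not} $t$-coverable; note that $\tau(H) = t+a$ here since every block has size $t+a$. First I would argue by contradiction: suppose $V = U_1 \cup \cdots \cup U_t$ is a partition witnessing $t$-coverability, i.e.\ every block $b \in B$ meets every part $U_i$. The key combinatorial observation is a pigeonhole/counting argument on the sizes of the $U_i$: since the $U_i$ partition a $v$-set into $t$ parts, some part, say $U_1$, has at most $\lfloor v/t \rfloor$ elements, while at least one part — in fact we want to control the \emph{union} of the small parts — is forced to be small. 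More precisely, I would select indices so that $|U_1| + \cdots + |U_a| $ is as small as possible; averaging gives $|U_1| + \cdots + |U_a| \le a\lfloor v/t\rfloor < v/t \cdot a$, but what we really need is a $t$-subset of $V$ that fails to be extendable to a block meeting all parts, contradicting the defining property of the Steiner system.

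The cleanest route is the following. Pick one element from each of the parts $U_{a+1}, \dots, U_t$; this is a set $W$ of $t - a$ elements. Now I claim we can choose $a$ further elements, one from each of $U_1, \dots, U_a$, such that the resulting $t$-set $T = W \cup \{u_1,\dots,u_a\}$ (with $u_i \in U_i$) has the property that the unique block $b \supseteq T$ of the Steiner system must be entirely contained in $U_1 \cup \cdots \cup U_a \cup W'$ for a controlled $W'$ — actually the argument should instead run by a direct \emph{counting of blocks}. Here is the version I expect to work: count pairs $(b, i)$ with $b \in B$, $i \in \{1,\dots,t\}$, $b \cap U_i = \emptyset$. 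If $\mathcal S$ is $t$-coverable this count is $0$. On the other hand, fix $i$; a block $b$ with $b \cap U_i = \emptyset$ is a $(t+a)$-subset of $V \setminus U_i$. Since $|U_i| \le v$ but we need a \emph{lower} bound on the number of such blocks, I would instead count blocks disjoint from $U_i$ using the Steiner property: the number of blocks through a fixed $t$-subset is exactly $1$, and the number of $t$-subsets inside $V \setminus U_i$ is $\binom{v - |U_i|}{t}$, each lying in a unique block; a block disjoint from $U_i$ accounts for $\binom{t+a}{t}$ of these $t$-subsets, while a block meeting $U_i$ accounts for fewer, so the number of blocks disjoint from $U_i$ is at least $\binom{v-|U_i|}{t}\big/\binom{t+a}{t} - (\text{correction})$. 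For some $i$ we have $|U_i| \le \lfloor v/t \rfloor$, and the hypothesis $v > (a+1)t$ should make $\binom{v - \lfloor v/t\rfloor}{t} > 0$, indeed large, forcing a block disjoint from that $U_i$ — contradiction.

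Let me streamline to avoid the correction term. The slickest statement: if $V = U_1 \cup \dots \cup U_t$, then some $U_j$ has $|U_j| \ge \lceil v/t \rceil$; equivalently $|V \setminus U_j| \le v - \lceil v/t\rceil$ for that $j$, which is not what I want. Flip it: some $U_j$ has $|U_j| \le \lfloor v/t \rfloor$, hence $|V \setminus U_j| \ge v - \lfloor v/t\rfloor \ge v - v/t = v(t-1)/t$. Now $v > (a+1)t$ gives $v(t-1)/t > (a+1)(t-1) \ge t+a$ precisely when $(a+1)(t-1) \ge t + a$, i.e.\ $at - a - 1 \ge 0$, i.e.\ $a(t-1)\ge 1$, true. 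So $V \setminus U_j$ contains at least $t + a$ elements — in fact strictly more — hence contains at least one $t$-subset $S$; let $b$ be the unique block of $\mathcal S$ containing $S$. This $b$ need not be disjoint from $U_j$, so one more idea is needed: I would instead choose $S$ to be a $t$-subset of $V\setminus U_j$ whose unique block is \emph{forced} to avoid $U_j$, which is automatic only if $|V\setminus U_j| \ge v$ — impossible. So the honest argument must count: the $\binom{|V\setminus U_j|}{t}$ many $t$-subsets of $V\setminus U_j$ each determine a unique block, and a block meeting $U_j$ in $c \ge 1$ points contains at most $\binom{t+a-c}{t} \le \binom{t+a-1}{t}$ of them; therefore the number of blocks disjoint from $U_j$ is at least $\binom{|V\setminus U_j|}{t} - (\text{number of blocks meeting } U_j)\binom{t+a-1}{t} \ge \binom{v-\lfloor v/t\rfloor}{t} - |B|\binom{t+a-1}{t}$, and plugging $|B| = \binom{v}{t}/\binom{t+a}{t}$ together with $v > (a+1)t$ should make this positive.

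\textbf{Main obstacle.} The analytic heart — and the step I expect to require genuine care — is verifying that the inequality $\binom{v - \lfloor v/t\rfloor}{t} > \dfrac{\binom{v}{t}}{\binom{t+a}{t}}\binom{t+a-1}{t}$ holds for all $v > (a+1)t$ and all $1 \le a \le t-2$; this is where the precise hypothesis $v > (a+1)t$ (rather than something weaker) gets used, and it may need either a monotonicity-in-$v$ argument reducing to the boundary case $v = (a+1)t + 1$, or a more clever choice of $S$ that sidesteps the binomial estimate entirely (for instance, choosing $S$ greedily so that its block is forced into $V \setminus U_j$ by a local counting argument on how blocks through a partial $t$-set distribute). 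Everything else — the reduction to non-$t$-coverability, the identification $\tau(H) = t+a$, and the contradiction with the Steiner defining property — is routine given Theorem~\ref{t.containement fails}.
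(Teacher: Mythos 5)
There is a genuine gap: you have the right reduction (apply Theorem~\ref{t.containement fails} with $d=t$ and $\tau(H)=t+a$, so it suffices to show $B$ is not $t$-coverable), but the combinatorial core of the non-coverability is missing, and the counting route you sketch does not close. The paper's argument is a short pigeonhole that bounds \emph{every} class, not just one: if some class $C_j$ of a covering partition $V=C_1\cup\cdots\cup C_t$ contained $a+2$ distinct elements, then since $a+2\le t$ these extend to a $t$-subset, which lies in a unique block $b$ of size $t+a$; that block has only $(t+a)-(a+2)=t-2$ elements outside $C_j$, so it can meet at most $t-1$ of the $t$ classes, contradicting coverability. Hence $|C_j|\le a+1$ for all $j$, so $v\le (a+1)t$, contradicting the hypothesis $v>(a+1)t$. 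This is exactly the observation your proposal never makes: you only ever control the size of \emph{one} part ($|U_j|\le\lfloor v/t\rfloor$) and then try to force a block disjoint from it by counting.

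That counting step is not just unverified but actually fails in part of the parameter range. After simplifying $\binom{t+a-1}{t}\big/\binom{t+a}{t}=\tfrac{a}{t+a}$, your "main obstacle" inequality becomes $\binom{v-\lfloor v/t\rfloor}{t}>\tfrac{a}{t+a}\binom{v}{t}$. For $v\to\infty$ the left side is asymptotically $(1-1/t)^t\binom{v}{t}$, and $(1-1/t)^t<\tfrac{a}{t+a}$ already for, e.g., $t=4$, $a=2$ (where $(3/4)^4\approx 0.316<1/3$), so the inequality is false for all large admissible $v$ there. Moreover, a block disjoint from one part need not violate coverability of the other $t-1$ parts, so even if the count succeeded it would not immediately give a contradiction without further work. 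The fix is to abandon the one-part estimate and use the block-size constraint to cap every class at $a+1$ elements as above.
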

\begin{proof}
	Note that, from Theorem \ref{t.containement fails},  it is enough to show that 	
	$B$ is not $t$-coverable.
	Assume by contradiction there is a partition of $V$ in  $t$ colour classes,  $V=C_1\cup\cdots\cup C_{t}$ such that $B\cap C_j\neq \emptyset$ for each $j=1,\ldots,t$ and $B$ a block of $\mathcal S.$ 
	We denote by $c_j$ the number of elements in $C_j$  for $j=1,\ldots,t$.
	Note that $c_j\le a+1$. Indeed if $i_1,i_2, \ldots, i_{a+2}\in C_j$ are different elements, then a block containing $i_1,i_2, \ldots, i_{a+2}$ cannot intersect $t$ colour classes.
 	This implies $v\le (a+1)t$.
\end{proof}

The next example shows that Theorem \ref{t.containement fails} does not characterize the failure of the containment. 
\begin{Example} Let $B$ denote the blocks of a Steiner quadruple system $SQS({8})=S(3,4,8)$ on the vertex set $V=\{x_1,x_2, x_3,x_4, x_5, x_6,x_7, x_8\}$,
	\[\begin{array}{rrl}
	B:=\{& \{x_1,x_2,x_3,x_4\}, \{x_1,x_2,x_5,x_6\}, \{x_1,x_2,x_7,x_8\}, \{x_1,x_3,x_5,x_7\}, \{x_1,x_3,x_6,x_8\},&\\
	   &\{x_1,x_4,x_5,x_8\}, \{x_1,x_4,x_6,x_7\}, \{x_2,x_3,x_5,x_8\}, \{x_2,x_3,x_6,x_7\}, \{x_2,x_4,x_5,x_7\},&\\
	  &\{x_2,x_4,x_6,x_8\}, \{x_3,x_4,x_5,x_6\}, \{x_3,x_4,x_7,x_8\}, \{x_5,x_6,x_7,x_8\}&\}.
	\end{array}\]

\noindent From Proposition \ref{p. t-coverability}, $B$ is not 3-coverable.
	Therefore Theorem \ref{t.containement fails} ensures $J_B^{(4)}\not\subseteq J_B^3.$
	However,  one can check that, for instance,
	$x_1x_2\cdots x_7\in J_B^{(3)}\setminus  J_B^2,$
	so the failure of the containment $J_B^{(3)}\subseteq  J_B^2$ cannot be motivated from Theorem \ref{t.containement fails}. 
	\end{Example}

\section{Conclusions}
Several conjectures have been posed on the Containment problem, creating an active area of current interests and ongoing investigations. In this paper, we show that the Stable Harbourne Conjecture and the Stable Harbourne -- Huneke Conjecture hold for the defining ideal of a Complement of a Steiner configuration of points in  $\mathbb{P}^{n}_{k}$.  Moreover, given a hypergraph $H$, we also study the relation between  its colourability  and that the Containment problem can fail for the cover ideal associated to $H$.  We wish to continue the study of  Steiner configurations of points and their Complements, since they are special subsets of star configurations whose Hilbert Function is the same as  sets of generic points while geometrically they are far of being generic. 

 We end this section recalling some open questions that are still under investigations and posing  new ones.
 
 We recall from \cite{BFGM}, that from a combinatorial point of view, two Steiner systems having the same parameters could have very different properties and such differences effect the homological invariants. Using experiments with  \cite{Cocoa} and \cite{Macaulay2} we ask:
	
	\begin{Question}\label{Q1}
		Let $(V,B)$ be a Steiner system of type $S(t,n,v)$, and  $X_{\mathcal H,B}$ the associated Steiner configuration of points.  Assume that the hyperplanes in $\mathcal H$ are chosen generically.  Do the Hilbert function and the graded Betti numbers of $X_{\mathcal H,B}$ only depend on $t,n,v$? 
	\end{Question}
	
	\begin{Question}\label{Q2}
		Let $(V,B)$ be a Steiner system of type $S(t,n,v)$, and  $X_{\mathcal H,B}$ the associated Steiner configuration of points. Assume that the hyperplanes in $\mathcal H$ are chosen generically.  Are the Hilbert function and the graded Betti numbers of $X_{\mathcal H,B}$ \textit{generic with respect to the Hilbert function?} (i.e. the same as a set of $|B|$ generic points in $\mathbb P^n$?) 
	\end{Question}
	
Given a hypergraph ${H}$, we also study the relation between  its colourability  and the failure of the containment problem for the cover ideal associated to $H$.  We suggest the following
	\begin{Question}
	 Can different types of colourings of a hypergraph give different answers to the Containment problem and  related conjectures?
	\end{Question}
	
	We also thank one of the referees to point out \cite{Ja1, Ja2}, where the author studies graph partitioning (fragmentation criteria) that  has   many   fields   of   applications   in   engineering,  especially  in  applied  sciences  as  applied  chemistry  and  physics,  computer science and automation, electronics and telecommunication. See \cite{Bal, Diu, Ran} just to cite some of them.
	
\begin{Question}
Can different types of colourings of hypergraphs give also different answers to fragmentation criteria?
\end{Question}



\begin{Acknowledgments}This paper represents the second part of a project started in April 2018 when E. Guardo and  L. Milazzo met together with the idea of making an interdisciplinary project between their own fields of research, such as Combinatorics and Commmutative Algebra/Algebraic Geometry. L. Milazzo died in 2019 but all the authors recognize his contribution in Section \ref{s.coloring}. We thank M. Gionfriddo and S. Milici for encouraging us to continue this project. We also thank Zs. Tuza for his useful comments in the revised version of the paper.
 Favacchio and Guardo's work has  been supported by the Universit\`{a} degli Studi di Catania, "Piano della Ricerca 2020/2022 Linea di intervento 2". The first three authors are members of GNSAGA of INdAM (Italy).
The computer softwares CoCoA \cite{Cocoa} and  Macaulay2 \cite{Macaulay2} were indispensable for all the computations we made.

The authors thank  all the referees for the useful suggestions/comments that improved the first version of the paper.
\end{Acknowledgments}

\end{document}